\documentclass[11pt]{article}

\usepackage[a4paper,margin=1in]{geometry}
\usepackage[T1]{fontenc}
\usepackage{lmodern}
\usepackage{amsmath}
\usepackage{amsthm}
\usepackage[hidelinks]{hyperref}

\theoremstyle{plain}
\newtheorem{theorem}{Theorem}[section]
\newtheorem{lemma}[theorem]{Lemma}
\newtheorem{proposition}[theorem]{Proposition}
\newtheorem{corollary}[theorem]{Corollary}
\theoremstyle{definition}
\newtheorem{definition}[theorem]{Definition}

\newcommand{\headers}[2]{}
\newcommand{\funding}[1]{#1}
\newcommand{\email}[1]{\href{mailto:#1}{\nolinkurl{#1}}}
\newenvironment{keywords}{\par\smallskip\noindent\textbf{Keywords.}\ }{\par\smallskip}
\newenvironment{MSCcodes}{\par\noindent\textbf{MSC codes.}\ }{\par\medskip}

\numberwithin{equation}{section}
\date{}

\usepackage{amsmath,amssymb,mathtools,booktabs}
\usepackage{graphicx}
\usepackage{enumitem}
\usepackage{microtype}
\usepackage{mathrsfs}
\newtheorem{remark}[theorem]{Remark}

\newcommand{\R}{\mathbb R}
\newcommand{\C}{\mathbb C}
\newcommand{\Hpop}{\mathcal H}
\newcommand{\Wmac}{\mathcal W}
\newcommand{\Klift}{\mathcal K}
\newcommand{\Smom}{\mathcal S}
\newcommand{\Jrev}{\mathcal J}
\newcommand{\Trans}{\mathcal T}
\newcommand{\Coll}{\mathcal C}

\newcommand{\Ker}{\operatorname{ker}}
\newcommand{\diag}{\operatorname{diag}}
\newcommand{\norm}[1]{\left\lVert#1\right\rVert}
\newcommand{\ip}[2]{\left\langle#1,#2\right\rangle}
\newcommand{\adj}{\dagger}

\headers{MESH-UNIFORM STABILITY WITH REVERSIBLE BOUNDARIES}{J. ZHAO}

\title{Mesh-Uniform Power Stability of Two-Relaxation-Time Vector Lattice Boltzmann Schemes with Reversible Boundaries\thanks{\funding{This work was supported by the Beijing Natural Science Foundation (No. JR25003) and the National Natural Science Foundation of China (grants 12301520 and 12671504).}}}

\author{Jin Zhao\thanks{Academy for Multidisciplinary Studies, Capital Normal University, Beijing 100048, China (\email{zjin@cnu.edu.cn}).}}

\hypersetup{
  pdftitle={Mesh-Uniform Power Stability of Two-Relaxation-Time Vector Lattice Boltzmann Schemes with Reversible Boundaries},
  pdfauthor={Jin Zhao},
  pdfsubject={Mesh-uniform power stability of vector two-relaxation-time lattice Boltzmann schemes with reversible boundaries},
  pdfkeywords={vector lattice Boltzmann method, two-relaxation-time scheme, initial-boundary value problems, mesh-uniform power stability, numerical range, nonnormal operators}
}

\begin{document}
\maketitle

\begin{abstract}
We consider the collision--transport operator for vector-valued two-relaxation-time lattice Boltzmann schemes linearized about a uniform rest state.  Assume that the equilibrium blocks are positive definite and that the link-even and link-odd relaxation parameters satisfy $s_++s_-=2$ and $0<s_-<2$.  If the homogeneous transport is unitary in the equilibrium metric and reversible under velocity exchange, then the powers of the amplification operator are bounded uniformly with respect to the number and arrangement of lattice nodes.  The admissible transports include periodic transport, vector halfway bounce-back, coordinate-aligned specular reflection, tangential orthogonal involutions, and compatible multi-channel scattering.  The proof reduces the population equation to a two-step macroscopic recurrence generated by a contraction.  An inclusion of the numerical range in an ellipse, combined with the Crouzeix--Palencia theorem, yields a dimension-independent estimate for the companion operator and hence the population bound.  For a three-coefficient off-midpoint boundary interpolation, we give an exact rational D2N5 example whose finite-domain amplification matrix has a real eigenvalue larger than one, although the interpolation coefficients and bulk parameters are admissible.  Thus coefficient convexity alone does not ensure stability for this boundary family.
\end{abstract}

\begin{keywords}
vector lattice Boltzmann method, two-relaxation-time scheme, initial-boundary value problems, mesh-uniform power stability, numerical range, nonnormal operators
\end{keywords}

\begin{MSCcodes}
65M12, 65M06, 47A12, 76M28
\end{MSCcodes}

\section{Introduction}
\label{sec:introduction}

Lattice Boltzmann methods alternate local relaxation of kinetic populations with exact translation along a finite velocity set.  Classical kinetic and hydrodynamic formulations appear in \cite{BenziSucciVergassola1992,QianDHumieresLallemand1992,HeLuo1997}.  Finite-difference and asymptotic interpretations \cite{Junk2001,JunkYong2003,JunkKlarLuo2005,BellottiGrailleMassot2022} and linear or weighted-energy stability analyses \cite{SterlingChen1996,LallemandLuo2000,BandaYongKlar2006,JunkYong2009,Rheinlander2010} provide the bulk framework used here.

Boundary closure requires a separate stability analysis, because the boundary operator may introduce unstable modes or substantial nonnormal growth even when the periodic interior scheme is stable.  Power and resolvent estimates are classical in matrix stability \cite{Kreiss1959}, and the distinction between the Cauchy problem and the boundary-closed problem is fundamental in the theory of hyperbolic initial-boundary value problems and difference approximations \cite{Kreiss1970,GustafssonKreissSundstrom1972,GustafssonKreissOliger1995}.  Bounded-domain lattice Boltzmann analyses appear in \cite{JunkYangBoundary2005,JunkYang2009}.  Recent scalar work treats strong stability and boundary symbols in population variables \cite{Bellotti2026Raw}; a two-velocity study analyzes spectral and pseudospectral behavior \cite{Bellotti2026TwoVelocities}.

Vector populations arise in discrete-kinetic and relaxation representations of conservation laws \cite{Bouchut1999,AregbaDriolletNatalini2000,Graille2014} and in kinetic models for magnetization and magnetohydrodynamics \cite{GuyerMcCall2000,Dellar2002,BatyEtAl2023}.  Related analyses address over-relaxation \cite{DruiEtAl2019}, entropy-stable relaxation \cite{BellottiHelluyNavoret2025,GuillonHelieHelluy2024}, and positivity or bound preservation \cite{WissocqLiuAbgrall2025}.

Diffusive vector-BGK approximations of incompressible flow were developed in \cite{CarforaNatalini2008,ZhaoZhangYong2020JMAA,Zhao2021}; vector boundary closures and a one-rate weighted $L^2$ estimate for the homogeneous halfway rule appear in \cite{ZhaoZhangYong2020}.  Off-lattice wall treatments include interpolation and multireflection closures \cite{MeiLuoShyy1999,BouzidiFirdaoussLallemand2001,GinzburgDHumieres2003}, single-node second-order constructions \cite{ZhaoYong2017,ZhaoHuangYong2019,MarsonEtAl2021}, a parametric vectorial finite-difference family \cite{ZhangFengZhao2021}, and the unified directional LI$^+$/EMR framework \cite{GinzburgEtAl2023Unified}.  The present work addresses the $\ell^2$ power stability of the rest-state linearized collision--transport operator on bounded lattices.

For each opposite-velocity pair, a two-relaxation-time (TRT) collision assigns distinct rates to the link-even and link-odd nonequilibrium components.  Scalar TRT work treats equilibrium and link formulations \cite{Ginzburg2005}, hydrodynamic solutions \cite{GinzburgSimple2008}, boundary parametrizations \cite{GinzburgTRT2008}, and truncation error and stability \cite{Ginzburg2012}.  We use
\begin{equation}
 s_++s_-=2,
 \label{eq:otrt-intro}
\end{equation}
often called the optimal TRT (OTRT) relation; the name is only terminology here.  In an equivalent forward/backward formulation, Dellar \cite{Dellar2024} observed that \eqref{eq:otrt-intro} sets the forward coefficient to one and permits a two-step macroscopic recurrence.  Dellar's later two-rate projection \cite{Dellar2025}, acting on symmetric and antisymmetric parts of the momentum flux, is distinct from the link-parity collision below.

Recent results address related but different questions.  For scalar nonlinear TRT schemes on the whole lattice, Aregba-Driollet and Bellotti \cite{AregbaDriolletBellotti2026TRT} establish componentwise monotonicity, $L^1$ contractivity, and convergence under grid refinement to the entropy solution.  Their framework contains the same rate-sum relation used here, called the magic combination there, and recalls the corresponding scalar two-step formulation; it does not analyze physical boundary scattering.  Their equilibrium boundary construction for multidimensional vectorial schemes \cite{AregbaDriolletBellotti2026Boundary} instead fills missing incoming populations with equilibrium values computed from exterior macroscopic states.  Its convergence theorem is for the scalar monotone case; systems are treated numerically.  A recent vectorial incompressible Navier--Stokes construction \cite{AregbaDriolletBellottiNataliniTenna2026} establishes second-order consistency, studies linearized spectral behavior, and reports nonlinear numerical tests.  The present paper studies homogeneous reversible scattering of the complete population state, proves a mesh-uniform bound for its vector rest-state linearization, and gives an exact instability result for a different off-midpoint boundary discretization.

A local collision estimate does not control the boundary-closed product: the natural equilibrium--kinetic quadratic form has cross-velocity blocks and is generally not invariant under streaming.  Factorwise contractivity in one microscopic norm is sufficient, not necessary.  In the equilibrium metric, however, the equilibrium lifting is isometric and homogeneous reversible transport is unitary; hence their macroscopic compression $P$ is a contraction.  Here $w^n=(w^n(x))_{x\in\Lambda}$ collects the nodal density--momentum perturbations, with $w^n(x)=(\delta\rho^n(x),\delta q^n(x)^T)^T$ obtained by summing the population perturbations at $x$.  On \eqref{eq:otrt-intro}, the complete linearized population update reduces exactly to
\[
 w^{n+1}=(P+bP^*)w^n-bw^{n-1},\qquad |b|<1.
\]

A numerical-range argument gives a dimension-independent companion estimate.  The numerical range of $P+bP^*$ lies in an ellipse determined by $b$, where the scalar recurrence polynomials are uniformly bounded; the Crouzeix--Palencia theorem \cite{CrouzeixPalencia2017} transfers this bound to the nonnormal operator.  Lifting the estimate and bounding velocity reversal give a population-space constant depending only on $a$, $\alpha$, and $s_-$, uniformly over finite lattices and reversible transports.  These include periodic transport, homogeneous vector halfway bounce-back, coordinate-aligned specular reflection, tangential orthogonal involutions, and compatible patchwise mixing.

The analysis also identifies an obstruction to a direct extension of the one-rate factorwise energy argument: when the lattice-speed parameter is nonzero, no block-diagonal positive-definite symmetrizer can both orthogonalize the equilibrium projection and commute with velocity reversal.  This obstruction concerns that separable proof strategy, not stability of the complete product.

For generic coefficients, the three-coefficient off-midpoint interpolation considered later mixes pre- and postcollision populations and lies outside the collision--transport factorization used here; coefficient-degenerate cases require a separate reversibility check.  We give an exact rational D2N5 construction with strictly positive interpolation coefficients and admissible bulk parameters whose full amplification matrix has a real eigenvalue larger than one.  The counterexample excludes the full convex-coefficient range as an unconditional stability region; it does not assert that every off-midpoint parameter choice is unstable.

The uniform theorem concerns the homogeneous rest-state linearization.  It gives an $\ell^2$ power bound and a uniform exterior resolvent estimate.  For a flat boundary, these supply the homogeneous stability component expected in a GKS analysis, but they do not include the full boundary-trace estimate for arbitrary incoming data.  Nonzero-background linearizations, nonlinear stability, and a completed second-order energy-compatible off-midpoint rule remain separate questions.

Section~\ref{sec:model} defines the linearization, and Section~\ref{sec:transport} the reversible transport class.  Sections~\ref{sec:companion}--\ref{sec:stability} prove the companion and population bounds.  Section~\ref{sec:interpolation} gives the exact off-midpoint instability result, and Section~\ref{sec:discussion} discusses boundary symbols and energy-compatible extensions.  The Supplementary Materials give the rational data underlying the counterexample and the finite-domain calculations; the associated programs and numerical data are supplied in the accompanying archive.

\section{The rest-state vector TRT linearization}
\label{sec:model}

This section formulates the rest-state linearized vector TRT collision and records the algebraic identities needed for the later population-to-macroscopic reduction.  It also shows why no block-diagonal symmetric positive definite metric can simultaneously orthogonalize the equilibrium projection and commute with velocity reversal.

\subsection{Discrete velocities and equilibrium Jacobians}

Let $d\in\{1,2,3\}$ and $m=d+1$.  The lattice velocity set is
\[
 \mathcal V=\{+e_1,\ldots,+e_d,-e_1,\ldots,-e_d,0\}\subset\mathbb Z^d.
\]
At a fixed node and time level, the scheme stores a vector population $F_i\in\R^m$ for each $i\in\mathcal V$: one density contribution and $d$ momentum contributions.  Suppressing node and time indices, write $W:=\sum_iF_i=(\rho,q^T)^T$ for the local density--momentum state.  The opposite velocity is $\bar i$; the rest velocity is self-opposite.  On an open set $\mathcal U\subset\R^m$ of admissible states, assume $M_i\in C^1(\mathcal U;\R^m)$ and
$\sum_{i\in\mathcal V}M_i(W)=W$ for $W\in\mathcal U$.

Before linearization, the local pairwise collision has the form
\begin{equation}
 F_i^*=F_i+\phi\bigl(M_i(W)-F_i\bigr)
          +\psi\bigl(M_{\bar i}(W)-F_{\bar i}\bigr),
 \label{eq:nonlinear-collision}
\end{equation}
with $\phi,\psi\in\R$, followed by transport and boundary scattering.  Fix a uniform rest state $W_0=(\rho_0,0^T)^T\in\mathcal U$, $\rho_0>0$, and set $f_i:=F_i-M_i(W_0)$, $f_i^*:=F_i^*-M_i(W_0)$, and $w:=W-W_0=\sum_i f_i=(\delta\rho,\delta q^T)^T$.  With $K_i:=D_WM_i(W_0)$, differentiability gives $M_i(W_0+w)-M_i(W_0)=K_iw+o(\norm{w})$.  Thus the linearized collision is
\begin{equation}
 f_i^*=f_i+\phi(K_iw-f_i)+\psi(K_{\bar i}w-f_{\bar i}).
 \label{eq:linearized-collision}
\end{equation}
Hereafter lowercase variables denote perturbations: $f=(f_i)_i$ is the full population state and $w$ its density--momentum moment.  We ask whether the collision in \eqref{eq:linearized-collision}, followed by transport with homogeneous boundary scattering, is power bounded uniformly over finite lattices and admissible boundaries.

Let $\mathbf e_0,\ldots,\mathbf e_d$ denote the canonical basis of $\R^m$, to distinguish it from the lattice velocities.  Define
\begin{equation}
 B_j=\mathbf e_0\mathbf e_j^T+\mathbf e_j\mathbf e_0^T,
 \qquad j=1,\ldots,d,
 \label{eq:Bj}
\end{equation}
and
\begin{equation}
 K_{+j}=aI_m+\frac{\alpha}{2}B_j,\qquad
 K_{-j}=aI_m-\frac{\alpha}{2}B_j,\qquad
 K_0=(1-2da)I_m.
 \label{eq:Ki}
\end{equation}
These are the rest-state Jacobians of the diffusive incompressible vector equilibria in \cite{ZhaoZhangYong2020JMAA,Zhao2021}.  Directly from \eqref{eq:Ki},
\begin{equation}
 \sum_{i\in\mathcal V}K_i=I_m.
 \label{eq:sumK}
\end{equation}
Let
\begin{equation}
 R_0=\diag(1,-I_d).
 \label{eq:R0}
\end{equation}
Then
\begin{equation}
 K_{\bar i}=R_0K_iR_0
 \label{eq:Kconjugacy}
\end{equation}
for each moving velocity.

\begin{lemma}[Positivity of the equilibrium blocks]
\label{lem:K-positive}
If
\begin{equation}
 0<\alpha,\qquad \frac{\alpha}{2}<a<\frac1{2d},
 \label{eq:positive-range}
\end{equation}
then every $K_i$ is symmetric positive definite.
\end{lemma}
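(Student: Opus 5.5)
The plan is to compute the spectrum of each block in \eqref{eq:Ki} explicitly; all blocks are symmetric because $B_j$ is. The rest block $K_0=(1-2da)I_m$ is a positive multiple of the identity exactly when $a<1/(2d)$, which is the upper bound in \eqref{eq:positive-range}. For the moving blocks we use the conjugacy \eqref{eq:Kconjugacy}: $K_{-j}=R_0K_{+j}R_0$ with $R_0$ orthogonal, so $K_{-j}$ is orthogonally similar to $K_{+j}$. It therefore suffices to treat $K_{+j}=aI_m+\tfrac{\alpha}{2}B_j$.

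Since $K_{+j}$ is a shift of a multiple of $B_j$, we only need the spectrum of $B_j=\mathbf e_0\mathbf e_j^T+\mathbf e_j\mathbf e_0^T$. This matrix acts nontrivially only on $\operatorname{span}\{\mathbf e_0,\mathbf e_j\}$, where it is the $2\times 2$ swap matrix with eigenvalues $\pm1$ and eigenvectors $\mathbf e_0\pm\mathbf e_j$. It vanishes on the orthogonal complement, of dimension $m-2=d-1$. Hence the eigenvalues of $K_{\pm j}$ are $a+\alpha/2$, $a-\alpha/2$, and, when $d\ge2$, also $a$ with multiplicity $d-1$.

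Under \eqref{eq:positive-range}, $\alpha>0$ and $a>\alpha/2$ give $a-\alpha/2>0$. Then $a>0$ and $a+\alpha/2>0$ follow, so every $K_i$ is symmetric positive definite.

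There is no real obstacle here. The only point needing care is the bookkeeping of the degenerate eigenvalue $a$ for $d\ge2$, which is automatically positive. We should also note that the hypothesis $\alpha>0$ only serves to make the stated interval for $a$ nonempty; the conclusion depends only on $|\alpha|/2<a<1/(2d)$.
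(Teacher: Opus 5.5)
Your proof is correct and follows the paper's argument. Both read off the eigenvalues $a\pm\alpha/2$ and $a$ (multiplicity $d-1$) of $K_{\pm j}$ from the action of $B_j$ on $\operatorname{span}\{\mathbf e_0,\mathbf e_j\}$ and its complement, together with $1-2da$ for the rest block; your detour through the $R_0$-conjugacy is harmless but unnecessary.

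One small correction to your closing aside: $\alpha>0$ is not what makes the interval for $a$ nonempty, since that interval is nonempty for $\alpha\le 0$ as well. Its role is to turn $a>\alpha/2$ into $a>|\alpha|/2$, which forces both $a\pm\alpha/2>0$, and that is exactly how you used it in the body of the proof.
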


\begin{proof}
On $\operatorname{span}\{\mathbf e_0,\mathbf e_j\}$, the matrix $B_j$ has eigenvalues $1$ and $-1$.  It vanishes on the orthogonal complement.  Hence the eigenvalues of $K_{\pm j}$ are $a+\alpha/2$, $a-\alpha/2$, and $a$ with multiplicity $d-1$.  The rest block has the eigenvalue $1-2da$ with multiplicity $m$.  All are positive under \eqref{eq:positive-range}.
\end{proof}

\subsection{Parity rates and global operators}

For a moving pair, set
\[
 f_i^+=\frac{f_i+f_{\bar i}}2,\qquad
 f_i^-=\frac{f_i-f_{\bar i}}2,
\]
and define $K_i^+=(K_i+K_{\bar i})/2$ and $K_i^-=(K_i-K_{\bar i})/2$.  Taking the half-sum and half-difference of the $i$ and $\bar i$ equations in \eqref{eq:linearized-collision} gives
\begin{equation}
 (f_i^+)^*=f_i^++s_+\bigl(K_i^+w-f_i^+\bigr),\qquad
 (f_i^-)^*=f_i^-+s_-\bigl(K_i^-w-f_i^-\bigr),
 \label{eq:parity-collision}
\end{equation}
where
\begin{equation}
 s_+=\phi+\psi,\qquad s_-=\phi-\psi,
 \qquad
 \phi=\frac{s_++s_-}{2},\qquad
 \psi=\frac{s_+-s_-}{2}.
 \label{eq:rates}
\end{equation}
Under the diffusive space--time scaling used for the incompressible models in \cite{ZhaoZhangYong2020JMAA,Zhao2021}, $s_-$ is the viscosity-setting link-odd rate, whereas $s_+$ is a kinetic or boundary-tuning rate.  The dimensional conversion depends on the scaling normalization and is not used in the stability argument below.

Let $\Lambda\subset\mathbb Z^d$ be nonempty and finite.  Define
\[
 \Hpop_\Lambda=(\R^m)^{\Lambda\times\mathcal V},\qquad
 \Wmac_\Lambda=(\R^m)^\Lambda.
\]
The moment map, equilibrium lifting, and velocity reversal are
\begin{equation}
 (\Smom f)(x)=\sum_i f_i(x),\qquad
 (\Klift w)_i(x)=K_iw(x),\qquad
 (\Jrev f)_i(x)=f_{\bar i}(x).
 \label{eq:SKJ}
\end{equation}
Equation \eqref{eq:sumK} implies
\begin{equation}
 \Smom\Klift=I_{\Wmac_\Lambda},\qquad
 E:=\Klift\Smom,\qquad E^2=E.
 \label{eq:Eprojection}
\end{equation}
The global linearized rest-state collision operator is
\begin{equation}
 \Coll=I+(\phi I+\psi\Jrev)(E-I).
 \label{eq:global-collision}
\end{equation}
On the rate relation
\begin{equation}
 s_++s_-=2,
 \label{eq:otrt}
\end{equation}
we have $\phi=1$.  With
\begin{equation}
 b=\psi=1-s_-,
 \label{eq:b}
\end{equation}
condition $0<s_-<2$ is exactly $|b|<1$, and \eqref{eq:global-collision} becomes
\begin{equation}
 \Coll f=(I+b\Jrev)\Klift\Smom f-b\Jrev f.
 \label{eq:collision-otrt}
\end{equation}
The one-rate condition $s_+=s_-$ intersects the OTRT relation \eqref{eq:otrt}
only at $s_+=s_-=1$; thus the analysis below concerns the admissible open
segment $0<s_-<2$ of the OTRT line, not the general one-rate family.

\subsection{Local collision dissipation and a structural obstruction}

The following identity records what local collision control does and does not provide.  Define
\[
 H_{\rm coll}:=E^TE+(I-E)^T(I-E),\qquad
 \norm{f}_{H_{\rm coll}}^2=f^TH_{\rm coll}f.
\]
Since $E$ is a projection,
\[
 \ip{u}{v}_{H_{\rm coll}}
 =\ip{Eu}{Ev}_2+\ip{(I-E)u}{(I-E)v}_2
\]
defines an inner product, so $H_{\rm coll}$ is positive definite.  Moreover,
$\Ker E=\Ker\Smom$: one inclusion follows from $E=\Klift\Smom$, and the
other from $\Smom E=\Smom$.  For $g=(I-E)f$, let
$g_+=(I+\Jrev)g/2$ and $g_-=(I-\Jrev)g/2$.  Velocity reversal preserves
$\Ker E$ because $\Smom\Jrev=\Smom$.

\begin{proposition}[Exact local collision identity]
\label{prop:local-collision}
For every $f$,
\begin{equation}
 \norm{\Coll f}_{H_{\rm coll}}^2-\norm{f}_{H_{\rm coll}}^2
 =-s_+(2-s_+)\norm{g_+}_2^2
  -s_-(2-s_-)\norm{g_-}_2^2.
 \label{eq:local-identity}
\end{equation}
Consequently, local collision is nonexpansive in $\norm{\cdot}_{H_{\rm coll}}$ if and only if $0\le s_+,s_-\le2$.
\end{proposition}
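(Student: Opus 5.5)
The plan is to split $f=Ef+(I-E)f=Ef+g$ and to compute how $\Coll$ acts on each part separately. From \eqref{eq:global-collision},
\[
 \Coll f=f-(\phi I+\psi\Jrev)g=Ef+\bigl((1-\phi)I-\psi\Jrev\bigr)g .
\]
Before using this, I will check that $E\Coll f=Ef$ and that $(I-E)\Coll f=\bigl((1-\phi)I-\psi\Jrev\bigr)g$. Both follow once $\bigl((1-\phi)I-\psi\Jrev\bigr)g\in\Ker E$. That holds because $g\in\Ker E=\Ker\Smom$, and velocity reversal preserves $\Ker E$ since $\Smom\Jrev=\Smom$. The inner-product representation of $H_{\rm coll}$ then gives
\[
 \norm{\Coll f}_{H_{\rm coll}}^2-\norm{f}_{H_{\rm coll}}^2
 =\norm{\bigl((1-\phi)I-\psi\Jrev\bigr)g}_2^2-\norm{g}_2^2 ,
\]
because the $E$-components coincide and cancel.

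Next I diagonalise $\Jrev$ on $g$. The map $\Jrev$ is an orthogonal involution in the Euclidean $\ell^2$ norm, being a permutation of population slots. So $g_\pm=(I\pm\Jrev)g/2$ are $\ell^2$-orthogonal, with $\Jrev g_\pm=\pm g_\pm$, and $\norm{g}_2^2=\norm{g_+}_2^2+\norm{g_-}_2^2$. On $g_+$ the operator $(1-\phi)I-\psi\Jrev$ acts as multiplication by $1-\phi-\psi=1-s_+$. On $g_-$ it acts as multiplication by $1-\phi+\psi=1-s_-$, using \eqref{eq:rates}. Orthogonality is preserved by these scalar multiplications. The difference therefore equals
\[
 \bigl((1-s_+)^2-1\bigr)\norm{g_+}_2^2+\bigl((1-s_-)^2-1\bigr)\norm{g_-}_2^2 .
\]
Since $(1-s)^2-1=-s(2-s)$, this is exactly \eqref{eq:local-identity}.

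For the equivalence, sufficiency is immediate: if $0\le s_\pm\le2$, both coefficients $-s_\pm(2-s_\pm)$ are nonpositive. For necessity I need test states that realise a nonzero $g_+$ alone, or a nonzero $g_-$ alone. Any element $h\in\Ker\Smom$ works as $g$, since then $(I-E)h=h$. Consider a single node with a moving pair $(i,\bar i)$, and fix a nonzero $v\in\R^m$.
\begin{itemize}
 \item The choice $h_i=v$, $h_{\bar i}=v$, $h_0=-2v$ lies in $\Ker\Smom$ and is $\Jrev$-even.
 \item The choice $h_i=v$, $h_{\bar i}=-v$ lies in $\Ker\Smom$ and is $\Jrev$-odd.
\end{itemize}
Taking $f=h$ in each case isolates one term. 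Hence if $s_+\notin[0,2]$ or $s_-\notin[0,2]$, the difference is positive for a suitable $f$.

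The only real obstacle is making sure the $E$-component is exactly invariant, i.e.\ that $\Jrev$ preserves $\Ker E$. That is already recorded before the statement, so the rest is bookkeeping.
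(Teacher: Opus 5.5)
Your proof is correct and follows the paper's own argument. You decompose $f=Ef+g_++g_-$, use that $\Coll$ fixes $Ef$ and multiplies $g_\pm$ by $1-s_\pm$, and test necessity on the same even and odd single-pair vectors in $\Ker\Smom$. The only difference is that you derive $\Coll f=Ef+\bigl((1-\phi)I-\psi\Jrev\bigr)g$ explicitly, with the $\Jrev$-invariance of $\Ker E$, where the paper simply asserts how $\Coll$ acts on each component.
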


\begin{proof}
Write $f=Ef+g_++g_-$.  The range of $E$ and its kernel are orthogonal in
$\ip{\cdot}{\cdot}_{H_{\rm coll}}$.  On $\Ker E$ this inner product is Euclidean, while
$g_+$ and $g_-$ belong to the $+1$ and $-1$ eigenspaces of the Euclidean
orthogonal involution $\Jrev$; hence they are orthogonal.  Collision fixes
$Ef$ and multiplies $g_+$ and $g_-$ by $1-s_+$ and $1-s_-$, respectively.
Subtraction of the squared norms now gives \eqref{eq:local-identity}.

For necessity, choose a moving pair and a nonzero $h\in\R^m$.  The vector with $g_i=h$, $g_{\bar i}=-h$, and all other blocks zero is a nonzero odd vector in $\Ker E$.  The vector with $g_i=g_{\bar i}=h$, $g_0=-2h$, and all other blocks zero is a nonzero even vector in $\Ker E$.  Testing \eqref{eq:local-identity} on these two vectors shows that nonexpansiveness requires $s_\pm(2-s_\pm)\ge0$.
\end{proof}

The global matrix $H_{\rm coll}$ is a direct sum over nodes, but its block at each node generally contains cross-velocity terms.  Streaming sends different velocity blocks to different nodes and therefore need not preserve the associated global quadratic form.  Thus $H_{\rm coll}$ is an auxiliary collision metric, not the transport-compatible equilibrium metric used in the main stability theorem.  One might instead seek a velocity-block-diagonal metric that is compatible with streaming.  At one lattice node, set $K_{\rm loc}=\operatorname{col}(K_i)_i$, $S_{\rm loc}=(I_m\ \cdots\ I_m)$, and let $E_{\rm loc}=K_{\rm loc}S_{\rm loc}$ denote the local equilibrium projection.  The next result records an obstruction to this particular separable construction.

\begin{proposition}[Absence of a common separable orthogonalizer]
\label{prop:no-common}
Assume $\alpha>0$.  There is no block-diagonal symmetric positive definite matrix $G=\diag(G_i)$ on the local velocity space such that
\begin{equation}
 GE_{\rm loc}=E_{\rm loc}^TG,\qquad G\Jrev=\Jrev G.
 \label{eq:no-common}
\end{equation}
\end{proposition}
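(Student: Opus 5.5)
The plan is to unpack both conditions in \eqref{eq:no-common} blockwise on the local velocity space $(\R^m)^{\mathcal V}$. The first condition will say that all products $G_iK_i$ coincide. The second will say that $G_{+j}=G_{-j}$. Together these force $K_{+j}=K_{-j}$, which contradicts $\alpha>0$. I do not expect to need Lemma~\ref{lem:K-positive} or invertibility of the $K_i$; only the invertibility of the blocks $G_i$ is used.

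\emph{Step 1 (block form of the orthogonality condition).} Since $E_{\rm loc}=K_{\rm loc}S_{\rm loc}$, its $(i,j)$ block is $K_i$ for every pair $i,j\in\mathcal V$. Because $G$ is block diagonal, the $(i,j)$ block of $GE_{\rm loc}$ is $G_iK_i$. The $(i,j)$ block of $E_{\rm loc}^T$ is $K_j^T=K_j$, by the symmetry of the blocks in \eqref{eq:Ki}, so the $(i,j)$ block of $E_{\rm loc}^TG$ is $K_jG_j$. Hence $GE_{\rm loc}=E_{\rm loc}^TG$ is equivalent to
\[
G_iK_i=K_jG_j \qquad\text{for all } i,j\in\mathcal V .
\]
Taking $i=j$ gives $G_iK_i=K_iG_i$. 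Therefore all products $G_iK_i$ equal one common matrix $C$.

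\emph{Step 2 (reversal condition).} The operator $\Jrev$ is the block permutation $i\mapsto\bar i$. For the block-diagonal $G$, the $(i,\bar i)$ block of $G\Jrev$ is $G_i$, and that of $\Jrev G$ is $G_{\bar i}$. So $G\Jrev=\Jrev G$ holds exactly when $G_i=G_{\bar i}$ for every $i$. In particular, for each $j$ we have $G_{+j}=G_{-j}=:G_j'$.

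\emph{Step 3 (conclusion).} Fix $j$. Step~1 gives $G_j'K_{+j}=C=G_j'K_{-j}$. The block $G_j'$ is positive definite, hence invertible, so $K_{+j}=K_{-j}$. By \eqref{eq:Ki} this means $\alpha B_j=0$. Since $\alpha>0$ and $B_j\neq0$ by \eqref{eq:Bj}, this is a contradiction, and no such $G$ exists.

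The only point needing care is the bookkeeping in Step~1, namely that the transpose of the rank-structured $E_{\rm loc}$ has blocks $K_j$ indexed by the column. The rest is immediate, so I do not anticipate a genuine obstacle.
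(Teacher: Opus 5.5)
Your proof is correct and matches the paper's argument: you read $GE_{\rm loc}=E_{\rm loc}^TG$ blockwise as $G_iK_i=K_jG_j$, take $i=j$ to get commutation, get $G_i=G_{\bar i}$ from the reversal condition, and use invertibility of $G_i$ to force $K_{+j}=K_{-j}$. Collecting the equal products into a common matrix $C$ is only a cosmetic variant of the paper's direct choice $j=\bar i$.
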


\begin{proof}
The $(i,j)$ block of $GE_{\rm loc}=E_{\rm loc}^TG$ is, using the symmetry
of every $K_j$,
\[
 G_iK_i=K_j^TG_j=K_jG_j.
\]
Taking $i=j$ shows that $G_i$ commutes with $K_i$.  The second equality in \eqref{eq:no-common} gives $G_i=G_{\bar i}$.  Taking $j=\bar i$ and using commutation for the opposite block yields
\[
 G_iK_i=K_{\bar i}G_i=G_iK_{\bar i}.
\]
Since $G_i$ is invertible, $K_i=K_{\bar i}$, contrary to \eqref{eq:Ki} when $\alpha>0$.
\end{proof}

Proposition~\ref{prop:no-common} rules out this particular factorwise argument, not stability of the product.

\section{Reversible boundary transport}
\label{sec:transport}

We now equip the population space with the equilibrium metric, identify the reversible transports that are unitary in that metric, and form the macroscopic compression $P=\Smom\Trans\Klift$, which is a contraction.  These properties also supply the adjoint identity used in the two-step recurrence of Section~\ref{sec:stability}.

\subsection{The equilibrium metric}

Under \eqref{eq:positive-range}, define the global equilibrium weight
\[
 D_\Lambda:=\bigoplus_{x\in\Lambda}\bigoplus_{i\in\mathcal V}K_i^{-1},
 \qquad (D_\Lambda f)_i(x)=K_i^{-1}f_i(x).
\]
It is symmetric positive definite by Lemma~\ref{lem:K-positive}.  The same
equilibrium-block weight underlies the one-rate vector boundary estimate in
\cite{ZhaoZhangYong2020}; here it is used as the equilibrium lifting and
transport metric, not as a common factorwise TRT collision symmetrizer.  When
$\Lambda$ is clear, equip $\Hpop_\Lambda$ with the shorthand
\begin{equation}
 \ip{f}{g}_D:=f^TD_\Lambda g
 =\sum_{x\in\Lambda}\sum_{i\in\mathcal V}
 f_i(x)^TK_i^{-1}g_i(x),
 \qquad \norm{f}_D^2=\ip{f}{f}_D,
 \label{eq:Dmetric}
\end{equation}
and equip $\Wmac_\Lambda$ with the Euclidean inner product.  Accordingly,
regard the equilibrium lifting as the linear map
\[
 \Klift:
 (\Wmac_\Lambda,\langle\cdot,\cdot\rangle_2)
 \longrightarrow
 (\Hpop_\Lambda,\langle\cdot,\cdot\rangle_D),
 \qquad (\Klift w)_i(x)=K_iw(x).
\]
The equilibrium projection $E$ is orthogonal in this metric, since
$D_\Lambda E=E^TD_\Lambda$.  For $\alpha>0$, however,
$D_\Lambda\Jrev\ne\Jrev D_\Lambda$; this is consistent with
Proposition~\ref{prop:no-common}.  The reversal is controlled later by a
uniform norm bound, while transport compatibility is imposed linkwise below.
The symbol $\adj$ denotes the adjoint for maps between these two inner-product
spaces; $^*$ denotes the Euclidean adjoint on the macroscopic space.

\begin{lemma}[Equilibrium lifting]
\label{lem:lifting}
The lifting $\Klift$ is an isometry and
\begin{equation}
 \Klift^\adj=\Smom,\qquad
 \norm{\Klift w}_D=\norm{w}_2,\qquad
 \norm{\Smom}_{D\to2}=1.
 \label{eq:lifting-isometry}
\end{equation}
\end{lemma}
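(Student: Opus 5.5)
The plan is to verify the adjoint identity directly from the definitions and then read off the other two claims from it together with $\Smom\Klift=I$ in \eqref{eq:Eprojection}.

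\emph{Step 1: the adjoint.} Take $w\in\Wmac_\Lambda$ and $f\in\Hpop_\Lambda$ and expand the metric \eqref{eq:Dmetric}:
\[
 \ip{\Klift w}{f}_D=\sum_{x\in\Lambda}\sum_{i\in\mathcal V}(K_iw(x))^TK_i^{-1}f_i(x)
 =\sum_{x}w(x)^T\sum_i K_iK_i^{-1}f_i(x)=\ip{w}{\Smom f}_2 .
\]
The middle equality uses the symmetry of each $K_i$ from Lemma~\ref{lem:K-positive}, so that $K_i^TK_i^{-1}=I_m$. Since this holds for all $w$ and $f$, we get $\Klift^\adj=\Smom$.

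\emph{Step 2: the isometry.} Put $f=\Klift w$ in Step 1. Then
\[
 \norm{\Klift w}_D^2=\ip{w}{\Smom\Klift w}_2=\norm{w}_2^2,
\]
because $\Smom\Klift=I$. This identity in turn rests on \eqref{eq:sumK}.

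\emph{Step 3: the norm of $\Smom$.} An adjoint has the same operator norm as the original map, so
\[
 \norm{\Smom}_{D\to2}=\norm{\Klift^\adj}=\norm{\Klift}_{2\to D}.
\]
By Step 2, $\Klift$ preserves norms. Because $\Wmac_\Lambda\neq\{0\}$, its operator norm is exactly $1$, not merely at most $1$.

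An alternative for Step 3 avoids adjoints. One notes that $\Klift\Smom=E$ is the $D$-orthogonal projection onto the range of $\Klift$; this uses $D_\Lambda E=E^TD_\Lambda$, which is itself equivalent to Step 1. Hence
\[
 \norm{\Smom f}_2=\norm{\Klift\Smom f}_D=\norm{Ef}_D\le\norm{f}_D .
\]
Equality holds for $f=\Klift w$.

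There is no real obstacle here. The only point needing care is that both the symmetry of each $K_i$ and the identity $\sum_iK_i=I_m$ are used. Positive definiteness, under \eqref{eq:positive-range}, is needed only so that $D_\Lambda$ is well defined and defines an inner product.
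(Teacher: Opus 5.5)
Your proof is correct and takes essentially the same approach as the paper: the same computation of $\ip{\Klift w}{f}_D$, using the symmetry of each $K_i$ and $\sum_i K_i=I_m$, gives $\Klift^\adj=\Smom$, and the isometry and $\norm{\Smom}_{D\to2}=1$ follow from it. The paper expands $\norm{\Klift w}_D^2=\sum_{x,i}w(x)^TK_iw(x)$ directly rather than substituting $f=\Klift w$, and your remarks on adjoint norms and $\Wmac_\Lambda\neq\{0\}$ spell out what the paper compresses into ``this gives the norm statements.''
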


\begin{proof}
By symmetry of the $K_i$ and \eqref{eq:sumK},
\[
 \norm{\Klift w}_D^2
 =\sum_{x,i}w(x)^TK_iw(x)
 =\sum_x\norm{w(x)}_2^2.
\]
Moreover,
\[
 \ip{\Klift w}{f}_D
 =\sum_{x,i}w(x)^Tf_i(x)
 =\ip{w}{\Smom f}_2.
\]
This gives the adjoint identity and the norm statements.
\end{proof}

\subsection{An abstract transport class}

\begin{definition}[Reversible equilibrium-compatible transport]
\label{def:reversible}
A homogeneous (zero-input) transport operator $\Trans:\Hpop_\Lambda\to\Hpop_\Lambda$ is called reversible and equilibrium-compatible if
\begin{equation}
 \Trans^\adj\Trans=I,
 \qquad
 \Trans^{-1}=\Jrev\Trans\Jrev.
 \label{eq:transport-assumptions}
\end{equation}
Since the population space is finite dimensional, the first identity makes
$\Trans$ invertible with $\Trans^{-1}=\Trans^\adj$ and hence unitary in
\eqref{eq:Dmetric}; the second identity is time reversal by exchange of
opposite velocities.
\end{definition}

The following linkwise criterion covers the boundary rules used below.  It also makes clear that the theorem is not restricted to a rectangular domain.

\begin{lemma}[Linkwise reflection criterion]
\label{lem:linkwise}
Assume that the transport matrix has exactly one nonzero invertible $m\times m$
block in every block row and every block column.  Thus it is a block
permutation.  For every directed block link that sends the source degree
$(y,j)$ to the target degree $(x,i)$ with component block $Q$, require
\begin{equation}
 Q^TK_i^{-1}Q=K_j^{-1}.
 \label{eq:link-metric}
\end{equation}
Require also that the source $(x,\bar i)$ is sent to the target $(y,\bar j)$ with block $Q^{-1}$.  Interior and periodic links use the identity component map.  A missing predecessor link for velocity $i$ is closed by
\begin{equation}
 f_i^{n+1}(x)=R_{x,i}f_{\bar i}^{*,n}(x),
 \label{eq:link-reflection}
\end{equation}
where
\begin{equation}
 R_{x,i}^{-1}=R_{x,i},\qquad
 R_{x,i}^TK_i^{-1}R_{x,i}=K_{\bar i}^{-1}.
 \label{eq:reflection-metric}
\end{equation}
Then the resulting global transport satisfies \eqref{eq:transport-assumptions}.
\end{lemma}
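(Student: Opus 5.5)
The plan is to write the global transport as a block permutation $\Trans=\Pi\,Q_{\rm blk}$, or more concretely as $(\Trans f)_i(x)=Q_{(x,i)}f_{\sigma(x,i)}$. Here $\sigma$ is a bijection of the degree set $\Lambda\times\mathcal V$ sending each target degree to its unique source, and $Q_{(x,i)}$ is the associated invertible component block. It equals $I_m$ for interior and periodic links and $R_{x,i}$ for a reflected link. A reflected link is itself a directed link with source $(x,\bar i)$ and target $(x,i)$, so the conditions \eqref{eq:reflection-metric} are the special cases of \eqref{eq:link-metric} and of the reversal requirement with $y=x$, $j=\bar i$, $Q=R_{x,i}$. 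Its reversed link sends $(x,\overline{i})\leftarrow(x,i)$ with block $R_{x,\bar i}$. Here I must check that the rule applied at the missing link for $\bar i$ (if it exists) has $R_{x,\bar i}=R_{x,i}^{-1}=R_{x,i}$. I will read the lemma's reversal hypothesis as imposing exactly this, and conversely read \eqref{eq:reflection-metric} as guaranteeing it. I will also check that interior links with $Q=I$ satisfy \eqref{eq:link-metric} trivially, since the source and target velocity coincide. After this reduction, every link is treated uniformly by the two hypotheses: \eqref{eq:link-metric}, and the reversal pairing $(x,\bar i)\mapsto(y,\bar j)$ with block $Q^{-1}$.

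For unitarity, I compute $\norm{\Trans f}_D^2=\sum_{(x,i)}f_{j}(y)^TQ^TK_i^{-1}Qf_j(y)$, summing over links $(y,j)\to(x,i)$. By \eqref{eq:link-metric} each term equals $f_j(y)^TK_j^{-1}f_j(y)$. Since $\sigma$ is a bijection, every source degree appears exactly once, so the sum is $\norm{f}_D^2$. Thus $\Trans$ is a $D$-isometry on a finite-dimensional space, and polarization gives $\Trans^\adj\Trans=I$.

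For reversibility, I verify $\Jrev\Trans\Jrev\Trans=I$ degree by degree. Take a source degree $(y,j)$ carrying $f_j(y)$. Transport places $Qf_j(y)$ at $(x,i)$. The reversal $\Jrev$ then moves this value to degree $(x,\bar i)$. By hypothesis, $\Trans$ sends $(x,\bar i)$ to $(y,\bar j)$ with block $Q^{-1}$, producing $f_j(y)$ at $(y,\bar j)$. The final $\Jrev$ returns it to $(y,j)$. Hence $\Jrev\Trans\Jrev\Trans=I$, and since $\Trans$ is invertible this gives $\Trans^{-1}=\Jrev\Trans\Jrev$.

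I expect the main obstacle to be bookkeeping rather than analysis. I must confirm that the closure \eqref{eq:link-reflection} really produces a bijection, meaning each missing predecessor is filled by a source $(x,\bar i)$ that is not otherwise used. I also need the reflection links to satisfy the reversal pairing consistently. For the pairing, the involution property $R_{x,i}^{-1}=R_{x,i}$ is used to identify the reversed link's block. For the bijection, I will argue that the source $(x,\bar i)$ has no outgoing interior link precisely when the target $(x,i)$ lacks an incoming one: the missing successor of $(x,\bar i)$ lies at $x+\bar i$, which is the same missing neighbor as the predecessor $x-i$. Together with the standing hypothesis that the matrix is a block permutation, this gives the result.
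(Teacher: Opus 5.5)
Your argument is correct and is the paper's: summing $Q^TK_i^{-1}Q=K_j^{-1}$ over the bijective block permutation gives the $D$-isometry and hence $\Trans^\adj\Trans=I$. Tracing a single source degree through $\Jrev\Trans\Jrev\Trans$ is equivalent to the paper's comparison of the $((y,j),(x,i))$ blocks of $\Jrev\Trans\Jrev$ and $\Trans^{-1}$.

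One aside is wrong and should be dropped. With $y=x$ and $j=\bar i$, the reversal partner of the reflected link $(x,\bar i)\to(x,i)$ is $(x,\bar i)\to(y,\bar j)=(x,i)$, i.e.\ the same link. The pairing requirement therefore reduces exactly to $R_{x,i}^{-1}=R_{x,i}$, and the closure at a missing link for $\bar i$ plays no role. The condition $R_{x,\bar i}=R_{x,i}$ that you propose to check is not needed, and it is not implied by \eqref{eq:reflection-metric}. For $d\ge2$ one may take $R_0$ at one end and the specular $R_j$ at the other end of a one-node-wide direction. Your own degree-tracing step never uses it: applied to a reflected link, it only requires $R_{x,i}^2=I$.
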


\begin{proof}
For a link $(y,j)\to(x,i)$, equation~\eqref{eq:link-metric} gives
\[
 (Qv)^TK_i^{-1}(Qv)=v^TK_j^{-1}v.
\]
Summing this equality once over all block columns, which is possible because
the block-permutation map is bijective, shows that the population energy is
unchanged and $\Trans^\adj\Trans=I$.  If
$\Trans_{(x,i),(y,j)}=Q$, the paired-link assumption says
$\Trans_{(y,\bar j),(x,\bar i)}=Q^{-1}$.  Consequently the
$((y,j),(x,i))$ block of $\Jrev\Trans\Jrev$ is $Q^{-1}$, exactly the
corresponding block of $\Trans^{-1}$.  This proves the reversal identity.
For a reflected link, the two requirements reduce to
\eqref{eq:reflection-metric} and $R_{x,i}^{-1}=R_{x,i}$.
\end{proof}

The preceding block-permutation criterion is convenient but not necessary.  The next criterion treats a boundary patch as one multi-channel scattering block.

\begin{lemma}[Boundary-patch scattering criterion]
\label{lem:patch-scattering}
Partition the boundary source and target degrees of freedom into disjoint patches.  On a patch $\beta$, stack the outgoing source blocks in $g_\beta\in\mathcal H_{O_\beta}$ and the missing incoming target blocks in $h_\beta\in\mathcal H_{I_\beta}$, and impose
\begin{equation}
 h_\beta=U_\beta g_\beta,
 \qquad U_\beta:\mathcal H_{O_\beta}\longrightarrow\mathcal H_{I_\beta}.
 \label{eq:patch-scattering}
\end{equation}
Let $D_{O_\beta}$ and $D_{I_\beta}$ be the restrictions of $D_\Lambda$ to these stacked blocks.  Suppose velocity reversal bijectively pairs $O_\beta$ with $I_\beta$, and denote its restriction from $\mathcal H_{O_\beta}$ to $\mathcal H_{I_\beta}$ by
\[
 J_\beta:\mathcal H_{O_\beta}\longrightarrow\mathcal H_{I_\beta}.
\]
If
\begin{equation}
 U_\beta^T D_{I_\beta}U_\beta=D_{O_\beta},
 \qquad
 U_\beta^{-1}=J_\beta^{-1}U_\beta J_\beta^{-1},
 \label{eq:patch-conditions}
\end{equation}
for every patch, and all remaining source--target links occur in reverse-paired metric-compatible pairs as in Lemma~\ref{lem:linkwise}, then the resulting global transport satisfies Definition~\ref{def:reversible}.  The second condition in \eqref{eq:patch-conditions} is equivalently
\[
 (J_\beta^{-1}U_\beta)^2=I_{\mathcal H_{O_\beta}}.
\]
\end{lemma}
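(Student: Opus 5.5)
The plan is to decompose the global transport $\Trans$ into block-permutation links and patch blocks, and to check the two identities in \eqref{eq:transport-assumptions} block by block.

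First fix an ordering of the population space in which the degrees of freedom are split into interior/periodic link sources and targets, together with the patch sets $O_\beta$ (sources) and $I_\beta$ (targets). The global $\Trans$ then becomes a direct sum. One summand is the block-permutation part $\Trans_0$, which covers the remaining reverse-paired links. The other summands are the maps $U_\beta:\mathcal H_{O_\beta}\to\mathcal H_{I_\beta}$. Since $D_\Lambda$ is block diagonal over degrees of freedom, it splits compatibly with this decomposition.

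For unitarity, compute $\norm{\Trans f}_D^2$ as a sum of the contributions of the individual pieces. For the link part, use \eqref{eq:link-metric} exactly as in the proof of Lemma~\ref{lem:linkwise}. For a patch, the first identity in \eqref{eq:patch-conditions} gives $(U_\beta g_\beta)^TD_{I_\beta}U_\beta g_\beta=g_\beta^TD_{O_\beta}g_\beta$. The target sets are disjoint and exhaust all degrees of freedom, and the same holds for the source sets. Summing therefore gives $\norm{\Trans f}_D=\norm{f}_D$, and polarization yields $\Trans^\adj\Trans=I$.

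For reversibility, note that $\Jrev$ maps $I_\beta$ onto $O_\beta$ and $O_\beta$ onto $I_\beta$; its restriction $\mathcal H_{I_\beta}\to\mathcal H_{O_\beta}$ is $J_\beta^{-1}$. Hence $\Jrev\Trans\Jrev$ acts on the patch by sending $\mathcal H_{I_\beta}$ to $\mathcal H_{O_\beta}$ through $J_\beta^{-1}U_\beta J_\beta^{-1}$, which is exactly the inverse map $U_\beta^{-1}:\mathcal H_{I_\beta}\to\mathcal H_{O_\beta}$ of the corresponding piece of $\Trans^{-1}$. On the link part, the identity is the argument of Lemma~\ref{lem:linkwise}. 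Because both pieces respect the same partition, $\Jrev\Trans\Jrev=\Trans^{-1}$.

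The main obstacle is the bookkeeping: one must verify that the partition of sources and targets is consistent with $\Jrev$, so that reversal never mixes a patch with a link or with another patch. This holds because $\Jrev$ pairs $O_\beta$ with $I_\beta$ bijectively, while link sources and targets are reverse-paired by hypothesis. Finally, the stated equivalence is direct. The condition $U_\beta^{-1}=J_\beta^{-1}U_\beta J_\beta^{-1}$ is equivalent to $I=J_\beta^{-1}U_\beta J_\beta^{-1}U_\beta=(J_\beta^{-1}U_\beta)^2$. Here invertibility of $U_\beta$ follows from the first condition, since the patch dimensions agree through the bijection $J_\beta$.
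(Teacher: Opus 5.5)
Your proposal is correct and follows essentially the same route as the paper's proof. You sum the patch isometry identity and the linkwise metric identity over the disjoint, exhaustive source and target partitions to obtain $D$-unitarity, and you identify the restriction of $\Jrev\Trans\Jrev$ to $\mathcal H_{I_\beta}$ as $J_\beta^{-1}U_\beta J_\beta^{-1}=U_\beta^{-1}$, with the reverse-paired links handling the rest. Your explicit check that $\Jrev$ never mixes patches with links, and your derivation of the equivalent form $(J_\beta^{-1}U_\beta)^2=I$, fill in details the paper leaves implicit.
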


\begin{proof}
The source and target partitions are disjoint and exhaustive.  Hence the first identity in \eqref{eq:patch-conditions}, summed over patches and combined with \eqref{eq:link-metric} on the remaining links, gives
\[
 \norm{\Trans f}_D^2=\norm{f}_D^2.
\]
The reversal bijection $J_\beta$ makes the source and target dimensions equal.  The first identity in \eqref{eq:patch-conditions} then makes $U_\beta$ injective and hence invertible, while the summed identity makes the finite-dimensional global map $D$-unitary.  On $\mathcal H_{I_\beta}$, the restriction of $\Jrev\Trans\Jrev$ is $J_\beta^{-1}U_\beta J_\beta^{-1}:\mathcal H_{I_\beta}\to\mathcal H_{O_\beta}$, which equals the corresponding restriction of $\Trans^{-1}$ by the second identity.  The reverse-paired interior links give the same equality off the boundary patches.  Thus $\Trans^{-1}=\Jrev\Trans\Jrev$ globally.
\end{proof}

The criteria are realized by the following transports.

\paragraph{Periodic transport.}
Pairwise periodic identification in each lattice direction is a block permutation with identity component blocks.  It preserves the equilibrium metric, and velocity reversal interchanges each periodic source--target pair; hence it satisfies Definition~\ref{def:reversible}.

\paragraph{Vector no-slip halfway bounce-back.}
At every missing predecessor link, take
\begin{equation}
 R_{x,i}=R_0=\diag(1,-I_d).
 \label{eq:noslip-reflection}
\end{equation}
Equation \eqref{eq:Kconjugacy}, after taking inverses and using
$R_0^T=R_0^{-1}=R_0$, gives \eqref{eq:reflection-metric}.  This is the homogeneous vector-type halfway rule analyzed in the one-rate setting in \cite{ZhaoZhangYong2020}.

\paragraph{Coordinate-aligned specular reflection.}
Suppose the missing link is normal to the coordinate direction $e_j$.  Let
\begin{equation}
 R_j=\diag\bigl(1,I_d-2e_je_j^T\bigr).
 \label{eq:specular-reflection}
\end{equation}
This map reverses the normal momentum component and leaves tangential components unchanged.  Since $R_jB_jR_j=-B_j$, it satisfies
\[
 R_jK_{+j}R_j=K_{-j},
\]
Taking inverses and using $R_j^T=R_j^{-1}=R_j$ therefore gives
\eqref{eq:reflection-metric}.  The same proof applies to mixtures of periodic, no-slip, and specular links as long as the global source-to-target map remains bijective.

\paragraph{Tangential orthogonal involutions.}
The two preceding wall reflections are endpoints of a larger componentwise class.  Let $Q_j:e_j^\perp\to e_j^\perp$ be any orthogonal involution and extend it by zero on $\operatorname{span}\{e_j\}$.  On $\R^d$ this extension satisfies
\[
 Q_je_j=0,\qquad Q_j^T=Q_j,\qquad
 Q_j^2=I_d-e_je_j^T.
\]
Set
\begin{equation}
 S_{j,Q}=-e_je_j^T+Q_j,
 \qquad
 R_{j,Q}=\diag(1,S_{j,Q}).
 \label{eq:tangential-involution}
\end{equation}
Then $S_{j,Q}^2=I_d$, $R_{j,Q}^2=I_m$, and
\[
 R_{j,Q}B_jR_{j,Q}=-B_j.
\]
Consequently $R_{j,Q}K_{+j}R_{j,Q}=K_{-j}$ and \eqref{eq:reflection-metric} holds.  The choices $Q_j=-(I_d-e_je_j^T)$ and $Q_j=I_d-e_je_j^T$ give vector no-slip and specular reflection, respectively.  In three dimensions, other orthogonal involutions of the two-dimensional tangential plane give additional reversible component reflections.

\paragraph{Patchwise orthogonal mixing.}
Consider $k$ cut links with the same coordinate normal $e_j$ and the same component reflection $R_{j,Q}$.  Use the canonical outgoing--incoming ordering, in which the matrix of the restricted velocity exchange is $J_\beta=I_{km}$, and let
\begin{equation}
 U_\beta=O_\beta\otimes R_{j,Q},
 \qquad O_\beta^TO_\beta=I_k,\qquad O_\beta^2=I_k.
 \label{eq:patch-orthogonal}
\end{equation}
Here $O_\beta$ may permute or orthogonally mix the $k$ wall channels.  Since
\[
 D_{I_\beta}=I_k\otimes K_{+j}^{-1},
 \qquad D_{O_\beta}=I_k\otimes K_{-j}^{-1},
\]
 equations \eqref{eq:tangential-involution}--\eqref{eq:patch-orthogonal} give the weighted-isometry identity in \eqref{eq:patch-conditions}.  They also give $U_\beta^{-1}=U_\beta=J_\beta^{-1}U_\beta J_\beta^{-1}$, so the reversal identity holds as well.  Lemma~\ref{lem:patch-scattering} therefore covers such finite multi-channel wall scattering.  This construction is a homogeneous stability class; no off-midpoint accuracy claim is attached to it.

\subsection{Macroscopic compression}

For a transport satisfying Definition~\ref{def:reversible}, define
\begin{equation}
 P=\Smom\Trans\Klift:\Wmac_\Lambda\to\Wmac_\Lambda.
 \label{eq:P}
\end{equation}

\begin{lemma}[Contraction and adjoint]
\label{lem:P}
The operator $P$ is a contraction in the Euclidean macroscopic norm, and
\begin{equation}
 P^*=\Smom\Trans\Jrev\Klift.
 \label{eq:P-adjoint}
\end{equation}
\end{lemma}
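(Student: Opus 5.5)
The plan has two parts: a norm estimate by factoring $P$ through maps already known to be nonexpansive, and the adjoint formula by computing $P^*$ directly from Lemma~\ref{lem:lifting} and then removing $\Trans^\adj$ with the reversal identity in \eqref{eq:transport-assumptions}.

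\textbf{Contraction.} Factor $P=\Smom\Trans\Klift$ through $(\Hpop_\Lambda,\ip{\cdot}{\cdot}_D)$. By Lemma~\ref{lem:lifting}, $\Klift$ is an isometry from $(\Wmac_\Lambda,\norm{\cdot}_2)$ into $(\Hpop_\Lambda,\norm{\cdot}_D)$, and $\norm{\Smom}_{D\to2}=1$. By Definition~\ref{def:reversible}, $\Trans$ is $D$-unitary. Multiplying the three norms gives $\norm{P}_{2\to2}\le 1$.

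\textbf{Adjoint.} The Euclidean adjoint of a composition through the $D$-space is the composition of the mixed adjoints in reverse order:
\[
P^*=\Klift^\adj\Trans^\adj\Smom^\adj .
\]
From Lemma~\ref{lem:lifting}, $\Klift^\adj=\Smom$, and hence also $\Smom^\adj=\Klift$. Unitarity gives $\Trans^\adj=\Trans^{-1}$, and the reversal identity gives $\Trans^{-1}=\Jrev\Trans\Jrev$. Therefore
\[
P^*=\Smom\Jrev\Trans\Jrev\Klift .
\]
It remains to remove the leftmost $\Jrev$. This uses $\Smom\Jrev=\Smom$, which holds because summing over all velocities is invariant under the relabeling $i\mapsto\bar i$ (this fact was already noted before Proposition~\ref{prop:local-collision}). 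The result is $P^*=\Smom\Trans\Jrev\Klift$, which is \eqref{eq:P-adjoint}.

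\textbf{Main obstacle.} The only delicate point is keeping track of which inner product each adjoint refers to. $\Jrev$ is not $D$-self-adjoint when $\alpha>0$, so one must not write $\Jrev^\adj=\Jrev$. The computation above avoids this: $\Jrev$ enters only through the algebraic identity for $\Trans^{-1}$, never as an adjoint. The one step that needs checking is that $\Smom^\adj=\Klift$ follows from $\Klift^\adj=\Smom$. This holds in finite dimensions because $(A^\adj)^\adj=A$ for maps between inner-product spaces.
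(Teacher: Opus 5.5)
Your proposal is correct and follows the paper's own proof: the contraction bound factors $P$ through the $D$-isometry $\Klift$, the $D$-unitary $\Trans$, and the norm-one map $\Smom$, and the adjoint is computed as $\Klift^\adj\Trans^\adj\Klift=\Smom\Jrev\Trans\Jrev\Klift$, followed by $\Smom\Jrev=\Smom$. Your remark that $\Smom^\adj=\Klift$ follows from $\Klift^\adj=\Smom$ spells out a step the paper uses without comment.
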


\begin{proof}
Lemma~\ref{lem:lifting} and transport unitarity give
\[
 \norm{Pw}_2\le\norm{\Trans\Klift w}_D=\norm{w}_2.
\]
For the adjoint,
\begin{align*}
 P^*
 &=\Klift^\adj\Trans^\adj\Klift
  =\Smom\Trans^{-1}\Klift
  =\Smom\Jrev\Trans\Jrev\Klift\\
 &=\Smom\Trans\Jrev\Klift,
\end{align*}
where $\Smom\Jrev=\Smom$ was used in the last step.
\end{proof}

\section{A mesh-uniform companion estimate}
\label{sec:companion}

This section proves a power bound, uniform in the dimension, for the companion recurrence associated with an arbitrary contraction.  We work on a finite-dimensional complex Hilbert space.  Real operators are complexified without changing their operator norms or the power estimates below.  Inner products are linear in the first argument, and $^*$ denotes the Hilbert-space adjoint in this section.  For a bounded operator $A$, let
\[
 W(A)=\{\ip{Ax}{x}:\norm{x}=1\}
\]
denote its numerical range.  We use the following consequence of the Crouzeix--Palencia theorem \cite{CrouzeixPalencia2017}: for every polynomial $q$,
\begin{equation}
 \norm{q(A)}\le \kappa_{\rm CP}
 \sup_{z\in W(A)}|q(z)|,
 \qquad \kappa_{\rm CP}=1+\sqrt2.
 \label{eq:CP}
\end{equation}
The same estimate holds with the closure of the numerical range, which is immaterial below.

Let $P$ be a contraction and $b\in\R$ with $|b|<1$.  Set
\begin{equation}
 S_b(P)=P+bP^*,
 \qquad
 \mathscr E_b=\{\zeta+b\overline\zeta:|\zeta|\le1\}.
 \label{eq:ellipse}
\end{equation}
The set $\mathscr E_b$ is the closed ellipse
\[
 \left\{(1+b)x+\mathrm{i}(1-b)y:x^2+y^2\le1\right\}.
\]

\begin{lemma}[Numerical-range inclusion]
\label{lem:ellipse}
If $P$ is a contraction, then
\begin{equation}
 W(S_b(P))\subset\mathscr E_b.
 \label{eq:W-ellipse}
\end{equation}
\end{lemma}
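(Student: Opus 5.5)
Fix a unit vector $x$ and set $\zeta:=\ip{Px}{x}$. Since the inner product is linear in the first argument,
\[
 \ip{P^*x}{x}=\ip{x}{Px}=\overline{\ip{Px}{x}}=\overline\zeta .
\]
Because $b$ is real, this gives
\[
 \ip{S_b(P)x}{x}=\ip{Px}{x}+b\ip{P^*x}{x}=\zeta+b\overline\zeta.
\]
By the Cauchy--Schwarz inequality and $\norm{P}\le1$ we have $|\zeta|\le\norm{Px}\norm{x}\le1$. Hence $\ip{S_b(P)x}{x}$ lies in the set $\{\zeta+b\overline\zeta:|\zeta|\le1\}=\mathscr E_b$ from \eqref{eq:ellipse}. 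Since $x$ was an arbitrary unit vector, this proves \eqref{eq:W-ellipse}.

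It remains to check the description of $\mathscr E_b$ as an ellipse. Write $\zeta=x+\mathrm i y$, now with real $x,y$. Then
\[
 \zeta+b\overline\zeta=(1+b)x+\mathrm i(1-b)y,
\]
and the condition $|\zeta|\le1$ is the same as $x^2+y^2\le1$. When $|b|<1$ both semi-axes $1\pm b$ are positive, so this is a genuine closed ellipse and the map $\zeta\mapsto\zeta+b\overline\zeta$ is a bijection of the closed disk onto it. The inclusion itself does not need this fact.

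No step here is a real obstacle. The two points that need care are that $b$ is real, so the conjugate of $\zeta$ is taken correctly, and that $\mathscr E_b$ already contains the whole disk image $\{|\zeta|\le1\}$ rather than only its boundary. Convexity of the numerical range (Toeplitz--Hausdorff) is therefore not needed.
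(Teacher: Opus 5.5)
Your proof is correct and is the same argument as the paper's: set $\zeta=\ip{Px}{x}$, note $|\zeta|\le1$ from Cauchy--Schwarz and $\norm{P}\le1$, and observe that $\ip{S_b(P)x}{x}=\zeta+b\overline\zeta$ therefore lies in $\mathscr E_b$. Your extra check of the ellipse description is fine but not needed for the inclusion.
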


\begin{proof}
For a unit vector $x$, set $p=\ip{Px}{x}$.  Then $|p|\le1$ and, by conjugate symmetry,
\[
 \ip{S_b(P)x}{x}=p+b\overline p\in\mathscr E_b.
\]
\end{proof}

Define scalar polynomials by
\begin{equation}
 q_0(s)=0,\qquad q_1(s)=1,\qquad
 q_{n+1}(s)=s q_n(s)-bq_{n-1}(s),\quad n\ge1.
 \label{eq:q-recurrence}
\end{equation}

\begin{lemma}[Uniform polynomial bound on the ellipse]
\label{lem:q-bound}
For every $n\ge1$,
\begin{equation}
 \sup_{s\in\mathscr E_b}|q_n(s)|
 \le \frac{2}{1-|b|}.
 \label{eq:q-bound}
\end{equation}
\end{lemma}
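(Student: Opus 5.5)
The plan is to write $q_n$ in closed form through the roots of the characteristic equation of the recurrence \eqref{eq:q-recurrence}, evaluate it only on the boundary of $\mathscr E_b$, and then use the maximum modulus principle. Since $q_n$ is a polynomial, hence holomorphic, $\sup_{s\in\mathscr E_b}|q_n(s)|$ is attained on $\partial\mathscr E_b$. So it suffices to bound $q_n(s)$ for $s$ on the boundary ellipse.

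\emph{Step 1: parametrize the boundary.} For real $b$ with $|b|<1$, the map $\zeta\mapsto\zeta+b\overline\zeta$ is the real-linear map $(x,y)\mapsto((1+b)x,(1-b)y)$. It is invertible, so it sends the closed unit disk onto $\mathscr E_b$ and the unit circle onto $\partial\mathscr E_b$. Hence every boundary point has the form $s=\zeta+b\overline\zeta=\zeta+b/\zeta$ with $|\zeta|=1$.

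\emph{Step 2: closed form of $q_n$.} The recurrence $q_{n+1}=sq_n-bq_{n-1}$ has characteristic polynomial $r^2-sr+b$. For $s=\zeta+b/\zeta$ its roots are $r_1=\zeta$ and $r_2=b/\zeta=b\overline\zeta$, so that $r_1+r_2=s$ and $r_1r_2=b$. With the initial data $q_0=0$, $q_1=1$, induction on $n$ gives
\[
 q_n(s)=\sum_{k=0}^{n-1}r_1^{k}r_2^{\,n-1-k}.
\]
This formula is valid even when $r_1=r_2$, so no case split is needed. The inductive step is the identity
\[
 (r_1+r_2)\sum_{k=0}^{n-1}r_1^kr_2^{n-1-k}-r_1r_2\sum_{k=0}^{n-2}r_1^kr_2^{n-2-k}=\sum_{k=0}^{n}r_1^kr_2^{n-k}.
\]
For $n=1$ the sum is $1$, and for $n=2$ it is $r_1+r_2=s$, both consistent with the recurrence.

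\emph{Step 3: estimate.} On the boundary we have $|r_1|=1$ and $|r_2|=|b|$. Therefore
\[
 |q_n(s)|\le\sum_{j=0}^{n-1}|b|^j\le\frac1{1-|b|}\le\frac2{1-|b|}.
\]
Alternatively, one may use $q_n=(r_1^n-r_2^n)/(r_1-r_2)$ with $|r_1-r_2|\ge1-|b|>0$ and numerator at most $1+|b|^n\le2$. This reproduces exactly the constant in \eqref{eq:q-bound}. The maximum principle from the first paragraph then extends the bound to all of $\mathscr E_b$.

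The only point needing care is Step 1, namely that the boundary of $\mathscr E_b$ is exactly the image of the unit circle. This is needed so that the root factorization applies at every boundary point. It follows from the invertibility of the real-linear map, which in turn uses $|b|<1$ and $b$ real. The rest is routine.
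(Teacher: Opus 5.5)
Your proof is correct and follows the paper's own route: you parametrize $\partial\mathscr E_b$ by $s=\zeta+b\overline\zeta$ with $|\zeta|=1$, identify the roots $\zeta$ and $b\overline\zeta$ of $r^2-sr+b$, bound the resulting closed form on the boundary, and conclude with the maximum-modulus principle. The one difference is minor: you use the geometric-sum form $q_n=\sum_{k}r_1^k r_2^{n-1-k}$, which needs no distinct-roots check and gives the sharper constant $1/(1-|b|)$. The paper instead uses the quotient $(\zeta^n-(b\overline\zeta)^n)/(\zeta-b\overline\zeta)$ together with $|\zeta^2-b|\ge 1-|b|$.
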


\begin{proof}
The ellipse has nonempty interior because $|b|<1$.  Its boundary is parameterized by
\[
 s=\zeta+b\overline\zeta,
 \qquad |\zeta|=1.
\]
For such $s$, the two roots of $r^2-sr+b=0$ are $\zeta$ and $b\overline\zeta$.  They are distinct because $|b|<1$.  The recurrence \eqref{eq:q-recurrence} therefore gives
\begin{equation}
 q_n(s)=
 \frac{\zeta^n-(b\overline\zeta)^n}
      {\zeta-b\overline\zeta},
 \qquad n\ge1.
 \label{eq:q-boundary-formula}
\end{equation}
The denominator satisfies
\[
 |\zeta-b\overline\zeta|
 =|\zeta^2-b|\ge1-|b|,
\]
whereas the numerator is at most $1+|b|^n\le2$.  Thus \eqref{eq:q-bound} holds on the boundary.  Since $q_n$ is entire and the ellipse has nonempty interior, the maximum-modulus principle extends it to the whole ellipse.
\end{proof}

\begin{theorem}[Uniform companion estimate for contractions]
\label{thm:companion}
Let $\Wmac$ be a finite-dimensional complex Hilbert space, let $P:\Wmac\to\Wmac$ be a contraction, and let $b\in\R$ with $|b|<1$.  Define
\begin{equation}
 G_b(P)=
 \begin{pmatrix}
 P+bP^*&-bI\\
 I&0
 \end{pmatrix}
 \quad\text{on }\Wmac\oplus\Wmac.
 \label{eq:G}
\end{equation}
Then
\begin{equation}
 \sup_{n\ge0}\norm{G_b(P)^n}
 \le C_G(b),
 \qquad
 C_G(b)=
 \frac{2(1+\sqrt2)\sqrt{2(1+b^2)}}{1-|b|}.
 \label{eq:G-uniform-bound}
\end{equation}
In particular, the bound is independent of the dimension of $\Wmac$ and of the particular contraction $P$.
\end{theorem}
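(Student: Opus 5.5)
Write $S=S_b(P)$.  The plan is to express the powers of the companion operator through the scalar polynomials $q_n$ of \eqref{eq:q-recurrence}, evaluated at $S$.  Then Lemma~\ref{lem:ellipse}, Lemma~\ref{lem:q-bound} and the Crouzeix--Palencia estimate \eqref{eq:CP} bound each block uniformly.

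\textbf{Block formula.}  First I would verify by induction on $n\ge1$ that
\[
 G_b(P)^n=
 \begin{pmatrix}
 q_{n+1}(S)&-b\,q_n(S)\\
 q_n(S)&-b\,q_{n-1}(S)
 \end{pmatrix}.
\]
For $n=1$ this is the definition \eqref{eq:G}, since $q_2(s)=s$, $q_1=1$ and $q_0=0$.  For the induction step, multiply on the left by $G_b(P)$.  The top row becomes $S q_{n+1}(S)-b q_n(S)=q_{n+2}(S)$ and $-b\bigl(Sq_n(S)-bq_{n-1}(S)\bigr)=-b\,q_{n+1}(S)$.  The bottom row simply copies the previous top row.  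Every block is a polynomial in the single operator $S$, so no commutation issue arises even though $P$ and $P^*$ need not commute.  The case $n=0$ is trivial, because $\norm{I}=1\le C_G(b)$.

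\textbf{Blockwise estimates.}  By Lemma~\ref{lem:ellipse}, $W(S)\subset\mathscr E_b$.  Applying \eqref{eq:CP} to $q_k$ and then Lemma~\ref{lem:q-bound} gives
\[
 \norm{q_k(S)}\le(1+\sqrt2)\frac{2}{1-|b|}=:M
\]
for every $k\ge1$.  For $k=0$ the block vanishes, so the bound holds trivially.

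\textbf{Assembling the bound.}  For $(u,v)\in\Wmac\oplus\Wmac$, each output component has the form $A u+ B v$ with $\norm{A}\le M$ and $\norm{B}\le |b|M$.  By Cauchy--Schwarz, $\norm{Au+Bv}^2\le M^2(1+b^2)(\norm u^2+\norm v^2)$.  Summing the two components gives
\[
 \norm{G_b(P)^n}\le M\sqrt{2(1+b^2)}=C_G(b),
\]
which is exactly the constant in \eqref{eq:G-uniform-bound}.  Nothing in the argument depends on the dimension of $\Wmac$ or on the particular contraction $P$.

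The main obstacle is conceptual rather than computational.  One has to recognize that the companion powers are polynomials in the non-normal operator $S$, so that the bound can pass through the numerical range.  Spectral information alone would not give a dimension-free constant for non-normal $S$.  Once that structure is identified, the remaining steps are the short induction and the norm bookkeeping above.
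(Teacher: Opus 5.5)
Your proposal is correct and follows the paper's proof essentially step for step. Both use the same induction expressing $G_b(P)^n$ as a block matrix of the polynomials $q_k(S)$, and the same blockwise bound $\norm{q_k(S)}\le 2(1+\sqrt2)/(1-|b|)$ obtained from Lemma~\ref{lem:ellipse}, Lemma~\ref{lem:q-bound} and \eqref{eq:CP}, followed by the same Cauchy--Schwarz assembly giving $C_G(b)$, with $n=0$ handled by $C_G(b)\ge 1$.
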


\begin{proof}
Put $S=S_b(P)$.  For $n=1$, the following formula follows from
$q_0(S)=0$, $q_1(S)=I$, and $q_2(S)=S$.  If it holds at $n$, left
multiplication by $G_b(P)$ changes the first block row to
\[
 \bigl(Sq_{n+1}(S)-bq_n(S),
 -b[S q_n(S)-bq_{n-1}(S)]\bigr)
 =\bigl(q_{n+2}(S),-bq_{n+1}(S)\bigr),
\]
while the old first row becomes the second row.  Thus induction using
\eqref{eq:q-recurrence} gives, for $n\ge1$,
\begin{equation}
 G_b(P)^n=
 \begin{pmatrix}
 q_{n+1}(S)&-bq_n(S)\\
 q_n(S)&-bq_{n-1}(S)
 \end{pmatrix}.
 \label{eq:G-power-formula}
\end{equation}
By Lemma~\ref{lem:ellipse}, Lemma~\ref{lem:q-bound}, and \eqref{eq:CP},
\begin{equation}
 \norm{q_n(S)}
 \le Q_b:=\frac{2(1+\sqrt2)}{1-|b|},
 \qquad n\ge1.
 \label{eq:q-operator-bound}
\end{equation}
The same inequality is valid for $q_0(S)=0$.  For $(x,y)\in\Wmac\oplus\Wmac$, formula \eqref{eq:G-power-formula} gives
\begin{align*}
 \norm{G_b(P)^n(x,y)}^2
 &\le 2Q_b^2\bigl(\norm{x}+|b|\norm{y}\bigr)^2\\
 &\le 2Q_b^2(1+b^2)\bigl(\norm{x}^2+\norm{y}^2\bigr).
\end{align*}
This proves \eqref{eq:G-uniform-bound} for $n\ge1$.  Since $C_G(b)>1$, the case $n=0$ is included.
\end{proof}

\begin{remark}
The constant in \eqref{eq:G-uniform-bound} is not claimed to be sharp.  Its deterioration as $|b|\to1$ is consistent with the loss of damping at the relaxation endpoints.  The point needed below is that no lattice dimension, boundary geometry, or spectral separation enters the estimate.
\end{remark}

\section{Mesh-uniform power and resolvent stability}
\label{sec:stability}

This section combines the macroscopic recurrence with the companion estimate, lifts the resulting bound to the full population update, and derives a uniform exterior resolvent bound and a weighted $\ell^2$ estimate for inhomogeneous terms.

Let $\Trans$ satisfy Definition~\ref{def:reversible}.  The complete homogeneous linearized update is
\begin{equation}
 f^{n+1}=\Trans\Coll f^n,
 \qquad w^n=\Smom f^n.
 \label{eq:full-update}
\end{equation}
Under \eqref{eq:otrt}, equations \eqref{eq:collision-otrt} and \eqref{eq:full-update} give
\begin{equation}
 f^{n+1}=\Trans(I+b\Jrev)\Klift w^n-b\Trans\Jrev f^n.
 \label{eq:full-otrt}
\end{equation}
Define
\begin{equation}
 \mathcal R=\Trans\Jrev.
 \label{eq:R}
\end{equation}
The reversibility identity implies
\begin{equation}
 \mathcal R^2=\Trans\Jrev\Trans\Jrev=I,
 \qquad
 \Trans\Jrev\Trans=\Jrev.
 \label{eq:R-involution}
\end{equation}

\begin{lemma}[Exact macroscopic recurrence]
\label{lem:recurrence}
Every solution of \eqref{eq:full-otrt} satisfies, for $n\ge1$,
\begin{equation}
 w^{n+1}=(P+bP^*)w^n-bw^{n-1}.
 \label{eq:recurrence}
\end{equation}
\end{lemma}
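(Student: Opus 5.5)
Apply $\Smom$ to the update \eqref{eq:full-otrt} at two consecutive steps and eliminate the non-macroscopic part of $f^n$ using the involution $\mathcal R=\Trans\Jrev$ from \eqref{eq:R-involution}.

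\emph{Step 1: one step of the moment.} Take the moment of \eqref{eq:full-otrt}:
\[
 w^{n+1}=\Smom\Trans\Klift w^n+b\,\Smom\Trans\Jrev\Klift w^n-b\,\Smom\Trans\Jrev f^n .
\]
By \eqref{eq:P} and \eqref{eq:P-adjoint} (Lemma~\ref{lem:P}), the first two terms equal $Pw^n+bP^*w^n$. It therefore suffices to prove
\[
 \Smom\mathcal R f^n=w^{n-1}\qquad(n\ge1).
\]

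\emph{Step 2: the key identity.} Express $f^n$ through \eqref{eq:full-otrt} at the previous step and apply $\mathcal R$, using $\mathcal R\Trans=\Trans\Jrev\Trans=\Jrev$ and $\mathcal R^2=I$ from \eqref{eq:R-involution}. This gives
\[
 \mathcal R f^n=\Jrev(I+b\Jrev)\Klift w^{n-1}-b\,\mathcal R^2 f^{n-1}
 =(\Jrev+b)\Klift w^{n-1}-bf^{n-1}.
\]
Now take moments. Since $\Smom\Jrev=\Smom$ and $\Smom\Klift=I$ by \eqref{eq:Eprojection}, we get
\[
 \Smom\mathcal R f^n=(1+b)w^{n-1}-bw^{n-1}=w^{n-1}.
\]
Substituting this into Step 1 yields \eqref{eq:recurrence}.

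\emph{Main point.} The only subtle step is recognizing that $\Smom\Trans\Jrev f^n$, which still involves the full population, collapses to a macroscopic quantity. This uses the reversal identity $\Trans\Jrev\Trans=\Jrev$ together with the invariance $\Smom\Jrev=\Smom$. The restriction $n\ge1$ is exactly what is needed for $f^n$ to be produced by an update, so that \eqref{eq:full-otrt} can be applied at step $n-1$. Everything else is bookkeeping with Lemma~\ref{lem:P}.
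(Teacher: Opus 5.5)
Your proposal is correct and follows the paper's proof essentially line for line. Like the paper, you take the moment of the update, identify $Pw^n+bP^*w^n$ via Lemma~\ref{lem:P}, and collapse $\Smom\mathcal R f^n$ to $w^{n-1}$ using $\Trans\Jrev\Trans=\Jrev$, $\mathcal R^2=I$, $\Smom\Jrev=\Smom$, and $\Smom\Klift=I$.
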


\begin{proof}
Applying $\Smom$ to \eqref{eq:full-otrt} and using \eqref{eq:P} and \eqref{eq:P-adjoint} gives
\begin{equation}
 w^{n+1}=Pw^n+bP^*w^n-b\Smom\mathcal R f^n.
 \label{eq:recurrence-pre}
\end{equation}
At the preceding time level,
\[
 f^n=\Trans(I+b\Jrev)\Klift w^{n-1}-b\Trans\Jrev f^{n-1}.
\]
Multiplication by $\mathcal R=\Trans\Jrev$ and use of \eqref{eq:R-involution} yield
\begin{align*}
 \mathcal Rf^n
 &=\Trans\Jrev\Trans(I+b\Jrev)\Klift w^{n-1}
   -b(\Trans\Jrev)^2f^{n-1}\\
 &=(\Jrev+bI)\Klift w^{n-1}-bf^{n-1}.
\end{align*}
Because $\Smom\Jrev=\Smom$ and $\Smom\Klift=I$,
\[
 \Smom\mathcal R f^n=(1+b)w^{n-1}-bw^{n-1}=w^{n-1}.
\]
Substitution into \eqref{eq:recurrence-pre} proves \eqref{eq:recurrence}.
\end{proof}

The next local constants are independent of the number of lattice nodes.  Set
\begin{equation}
 \kappa_-=
 \min\left\{a-\frac\alpha2,\,1-2da\right\},
 \qquad
 \kappa_+=
 \max\left\{a+\frac\alpha2,\,1-2da\right\},
 \label{eq:kappa-pm}
\end{equation}
and
\begin{equation}
 c_J=\left(\frac{a+\alpha/2}{a-\alpha/2}\right)^{1/2}.
 \label{eq:cJ}
\end{equation}

\begin{lemma}[Uniform local norm bounds]
\label{lem:local-uniform}
Under \eqref{eq:positive-range},
\begin{equation}
 \frac1{\sqrt{\kappa_+}}\norm{f}_2
 \le\norm{f}_D
 \le\frac1{\sqrt{\kappa_-}}\norm{f}_2,
 \qquad
 \norm{\Jrev f}_D\le c_J\norm{f}_D.
 \label{eq:norm-equivalence-J}
\end{equation}
The constants are independent of $\Lambda$.
\end{lemma}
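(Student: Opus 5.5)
The plan is to reduce both estimates to spectral bounds for the individual $m\times m$ blocks $K_i$. Everything is a direct sum over nodes and velocities, so the global constants will be suprema of local ones, and hence independent of $\Lambda$.

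\emph{Norm equivalence.} By Lemma~\ref{lem:K-positive}, each $K_{\pm j}$ has eigenvalues $a+\alpha/2$, $a-\alpha/2$ and $a$, and $K_0$ has the single eigenvalue $1-2da$. All of these lie in $[\kappa_-,\kappa_+]$, since $a-\alpha/2\le a\le a+\alpha/2$. The eigenvalues of each $K_i^{-1}$ therefore lie in $[1/\kappa_+,1/\kappa_-]$, which gives
\[
 \frac{1}{\kappa_+}\norm{f_i(x)}_2^2\le f_i(x)^TK_i^{-1}f_i(x)\le\frac{1}{\kappa_-}\norm{f_i(x)}_2^2 .
\]
Summing over $x\in\Lambda$ and $i\in\mathcal V$ and taking square roots yields the first pair of inequalities.

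\emph{Reversal bound.} Write
\[
 \norm{\Jrev f}_D^2=\sum_{x,i} f_{\bar i}(x)^TK_i^{-1}f_{\bar i}(x).
\]
The rest velocity contributes the same term to both sides. For a moving velocity, I would use \eqref{eq:Kconjugacy} in the form $K_i^{-1}=R_0K_{\bar i}^{-1}R_0$. Each summand is then $v^TK_i^{-1}v$ with $v=f_{\bar i}(x)$, and it suffices to bound the generalized Rayleigh quotient $v^TK_i^{-1}v/(v^TK_{\bar i}^{-1}v)$. Its supremum is the largest eigenvalue of $K_{\bar i}^{1/2}K_i^{-1}K_{\bar i}^{1/2}$.

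This is the only step that needs care. On $\operatorname{span}\{\mathbf e_0,\mathbf e_j\}^\perp$ both matrices equal $aI$. On that span, $B_j$ and $I$ commute, so $K_{+j}$ and $K_{-j}$ are simultaneously diagonalizable in the eigenbasis of $B_j$ (eigenvectors $(1,\pm1)/\sqrt2$). There they have eigenvalues $(a+\alpha/2,\,a-\alpha/2)$ and $(a-\alpha/2,\,a+\alpha/2)$ respectively. The ratios of corresponding eigenvalues are $(a\pm\alpha/2)/(a\mp\alpha/2)$, together with $1$ on the complement, so the maximum ratio is $(a+\alpha/2)/(a-\alpha/2)=c_J^2$. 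Hence every summand satisfies $v^TK_i^{-1}v\le c_J^2\,v^TK_{\bar i}^{-1}v$.

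Summing over nodes and velocities, and reindexing $i\leftrightarrow\bar i$ (a bijection of $\mathcal V$), gives $\norm{\Jrev f}_D^2\le c_J^2\norm{f}_D^2$.

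The constants $\kappa_\pm$ and $c_J$ depend only on $a$, $\alpha$ and $d$, never on $\Lambda$.
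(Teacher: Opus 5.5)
Your proof is correct and follows essentially the same route as the paper. The norm equivalence comes from eigenvalue bounds on each $K_i$. For the reversal estimate, $K_i$ and $K_{\bar i}$ commute (they are simultaneously diagonal in the eigenbasis of $B_j$), so the relevant Rayleigh quotient is bounded by the eigenvalue ratio $(a+\alpha/2)/(a-\alpha/2)=c_J^2$; the only difference is that you reindex $i\leftrightarrow\bar i$ after the termwise comparison rather than before. The conjugacy $K_i^{-1}=R_0K_{\bar i}^{-1}R_0$ that you mention is never actually used and can be dropped.
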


\begin{proof}
The first two inequalities follow by applying the smallest and largest eigenvalue bounds for the local matrices $K_i$ at every node.  For the reversal estimate, rename opposite velocity indices to obtain
\[
 \norm{\Jrev f}_D^2
 =\sum_{x,i}f_i(x)^TK_{\bar i}^{-1}f_i(x).
\]
For a moving pair, $K_i$ and $K_{\bar i}$ commute.  On
$\operatorname{span}\{\mathbf e_0,\mathbf e_j\}$, the eigenvalues of
$K_iK_{\bar i}^{-1}$ are $r$ and $r^{-1}$, where
$r=(a+\alpha/2)/(a-\alpha/2)$; on the orthogonal complement and the rest
block the ratio is $1$.  Summation gives the second assertion.
\end{proof}

\begin{theorem}[Mesh-uniform power stability]
\label{thm:main}
Let $d\in\{1,2,3\}$ and assume
\begin{equation}
 0<\alpha<2a<\frac1d,
 \qquad 0<s_-<2,
 \qquad s_+=2-s_-.
 \label{eq:main-conditions}
\end{equation}
For every nonempty finite lattice $\Lambda\subset\mathbb Z^d$ and every transport $\Trans$ satisfying Definition~\ref{def:reversible}, the complete rest-state linearized amplification operator
\[
 A_\Lambda=\Trans\Coll
\]
is power bounded uniformly in $\Lambda$ and $\Trans$.  More precisely, there exists an explicit constant
\[
 M_D=M_D(a,\alpha,s_-)<\infty
\]
such that
\begin{equation}
 \sup_{n\ge0}\norm{A_\Lambda^n}_{D\to D}\le M_D,
 \label{eq:D-uniform-power}
\end{equation}
and therefore
\begin{equation}
 \sup_{n\ge0}\norm{A_\Lambda^n}_{2\to2}
 \le \left(\frac{\kappa_+}{\kappa_-}\right)^{1/2}M_D.
 \label{eq:2-uniform-power}
\end{equation}
The same bounds hold for grid-weighted $\ell^2$ norms obtained by multiplying both population norms by a common cell-volume factor.
\end{theorem}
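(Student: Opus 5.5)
The idea is to express the population iterate $f^n$ through the macroscopic sequence $(w^k)$ and the initial datum. Theorem~\ref{thm:companion} controls $(w^k)$ uniformly, and the metric bounds of Lemmas~\ref{lem:lifting} and~\ref{lem:local-uniform} control the lifting and reversal steps.

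\emph{Macroscopic control.} Set $w^n=\Smom f^n$ with $f^n=A_\Lambda^nf^0$. By Lemma~\ref{lem:recurrence},
\[
(w^{n+1},w^n)=G_b(P)^n(w^1,w^0),
\]
where $P$ is a contraction by Lemma~\ref{lem:P}. Theorem~\ref{thm:companion}, applied after complexification, gives $\norm{w^n}_2\le C_G(b)(\norm{w^1}_2^2+\norm{w^0}_2^2)^{1/2}$. Since $\norm{\Smom}_{D\to2}=1$, we have $\norm{w^0}_2\le\norm{f^0}_D$. Moreover $w^1=\Smom\Trans\Coll f^0$, and the one-step bound $\norm{A_\Lambda}_{D\to D}\le\norm{\Coll}_{D\to D}$ holds because $\Trans$ is $D$-unitary. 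From \eqref{eq:collision-otrt}, using Lemma~\ref{lem:lifting} and Lemma~\ref{lem:local-uniform},
\[
\norm{\Coll f}_D\le(1+|b|c_J)\norm{f}_D+|b|c_J\norm{f}_D .
\]
Hence $\norm{w^n}_2\le C_1\norm{f^0}_D$ for all $n$, with $C_1=C_G(b)\sqrt{1+(1+2|b|c_J)^2}$.

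\emph{Lifting to populations.} Write \eqref{eq:full-otrt} as
\[
f^{n+1}=g^n-b\mathcal Rf^n,\qquad g^n:=\Trans(I+b\Jrev)\Klift w^n .
\]
Iterating directly gives powers of $-b\mathcal R$, which is not a $D$-contraction. Instead, use $\mathcal R^2=I$ from \eqref{eq:R-involution} and unroll two steps:
\[
f^{n+1}=g^n-b\mathcal Rg^{n-1}+b^2f^{n-1}.
\]
Iterating this in steps of two yields
\[
f^{n+1}=\sum_k b^{2k}\bigl(g^{n-2k}-b\mathcal Rg^{n-1-2k}\bigr)+(\text{a }b^{2\lfloor\cdot\rfloor}\text{ multiple of }f^0\text{ or }f^1).
\]
Each $g^k$ satisfies $\norm{g^k}_D\le(1+|b|c_J)\norm{w^k}_2$. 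Also $\norm{\mathcal Rh}_D=\norm{\Jrev h}_D\le c_J\norm{h}_D$. Summing the geometric series in $b^2$ therefore gives
\[
\norm{f^{n+1}}_D\le\frac{(1+|b|c_J)(1+|b|c_J)C_1}{1-b^2}\norm{f^0}_D+\max(1,\norm{\Coll}_{D\to D})\norm{f^0}_D .
\]
This is the explicit $M_D(a,\alpha,s_-)$, since $b=1-s_-$ and $c_J$ depends only on $a,\alpha$. The $\ell^2$ bound \eqref{eq:2-uniform-power} then follows from the norm equivalence in Lemma~\ref{lem:local-uniform}. A common cell-volume factor cancels in operator norms.

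\emph{Main obstacle.} The delicate step is handling the term $-b\mathcal Rf^n$ without accumulating factors $c_J^n$. Direct iteration fails, so the plan relies on the involution $\mathcal R^2=I$ to collapse even powers. The care needed is in the bookkeeping of the parity of $n$ and of the base terms $f^0,f^1$, which must be checked so that no dependence on $\Lambda$ or $\Trans$ enters.
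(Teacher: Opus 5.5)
Your proposal is correct and follows the paper's proof. Theorem~\ref{thm:companion} bounds $w^n$ uniformly, and the population iterate is then recovered from $f^{n+1}=\Trans(I+b\Jrev)\Klift w^n-b\mathcal Rf^n$, with a geometric sum in $|b|$. Your two-step unrolling via $\mathcal R^2=I$ repackages the paper's direct Duhamel formula, which uses $\mathcal R^{2k}=I$ and $\mathcal R^{2k+1}=\Trans\Jrev$ to get $\sup_j\norm{\mathcal R^j}_{D\to D}\le c_J$. So direct iteration does succeed once the involution is used, and the explicit constants differ only cosmetically.
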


\begin{proof}
Let $\beta=|b|<1$ and define
\begin{equation}
 c_C=1+2\beta c_J,
 \qquad
 c_L=1+\beta c_J,
 \qquad
 M_w=C_G(b)\sqrt{1+c_C^2}.
 \label{eq:proof-constants}
\end{equation}
Because $\Klift$ is an isometry and $\Smom=\Klift^\adj$ is a contraction, the equilibrium projection $E=\Klift\Smom$ is an orthogonal projection in the $D$-metric.  From \eqref{eq:collision-otrt} and Lemma~\ref{lem:local-uniform},
\begin{align}
 \norm{\Coll f}_D
 &\le \norm{Ef}_D+\beta\norm{\Jrev Ef}_D
       +\beta\norm{\Jrev f}_D\notag\\
 &\le c_C\norm{f}_D.
 \label{eq:C-uniform}
\end{align}
Thus
\[
 \norm{w^0}_2=\norm{\Smom f^0}_2\le\norm{f^0}_D,
 \qquad
 \norm{w^1}_2=\norm{\Smom\Trans\Coll f^0}_2
 \le c_C\norm{f^0}_D.
\]
Lemma~\ref{lem:P} makes $P$ a contraction.  For $n\ge1$,
Lemma~\ref{lem:recurrence} gives
\[
 \binom{w^n}{w^{n-1}}
 =G_b(P)^{n-1}\binom{w^1}{w^0}.
\]
Theorem~\ref{thm:companion}, together with the two preceding initial bounds,
therefore gives
\begin{equation}
 \sup_{n\ge0}\norm{w^n}_2
 \le M_w\norm{f^0}_D.
 \label{eq:w-uniform}
\end{equation}

Set
\[
 L=\Trans(I+b\Jrev)\Klift.
\]
Transport unitarity, Lemma~\ref{lem:lifting}, and Lemma~\ref{lem:local-uniform} imply
\begin{equation}
 \norm{L}_{2\to D}\le c_L.
 \label{eq:L-uniform}
\end{equation}
Equation \eqref{eq:full-otrt} can be written
\begin{equation}
 f^{n+1}=Lw^n-b\mathcal Rf^n.
 \label{eq:variation-step}
\end{equation}
Iteration gives
\begin{equation}
 f^n=(-b\mathcal R)^nf^0
 +\sum_{k=0}^{n-1}(-b\mathcal R)^{n-1-k}Lw^k.
 \label{eq:Duhamel}
\end{equation}
Since $\mathcal R^2=I$, one has $\mathcal R^{2k}=I$ and
$\mathcal R^{2k+1}=\Trans\Jrev$ for $k\ge0$.  Transport unitarity and
Lemma~\ref{lem:local-uniform} give
\begin{equation}
 \sup_{j\ge0}\norm{\mathcal R^j}_{D\to D}
 \le c_J.
 \label{eq:R-uniform}
\end{equation}
Combining \eqref{eq:w-uniform}--\eqref{eq:R-uniform} with the geometric series yields
\begin{equation}
 \norm{f^n}_D
 \le c_J\left(1+\frac{c_LM_w}{1-\beta}\right)\norm{f^0}_D.
 \label{eq:MD-formula}
\end{equation}
Thus one admissible explicit constant is
\begin{equation}
 M_D=
 c_J\left[
 1+\frac{(1+\beta c_J)C_G(b)
 \sqrt{1+(1+2\beta c_J)^2}}
 {1-\beta}
 \right].
 \label{eq:MD}
\end{equation}
It depends only on $a$, $\alpha$, and $s_-$.  This proves \eqref{eq:D-uniform-power}.  The Euclidean estimate \eqref{eq:2-uniform-power} follows from \eqref{eq:norm-equivalence-J}.  Multiplication of both norms by a common cell-volume factor leaves every operator estimate unchanged.
\end{proof}

\begin{corollary}[Refining mesh families]
\label{cor:mesh-family}
Fix $d$, $a$, $\alpha$, and $s_-$ satisfying \eqref{eq:main-conditions}.  Let $h$ range over any set of positive mesh widths, let $\Lambda_h\subset\mathbb Z^d$ be finite and nonempty, and let $\Trans_h$ satisfy Definition~\ref{def:reversible}.  Define
\[
 \norm{f}_{D,h}^2
 =h^d\sum_{x\in\Lambda_h}\sum_{i\in\mathcal V}
 f_i(x)^TK_i^{-1}f_i(x).
\]
Then, with $A_h=\Trans_h\Coll$,
\[
 \sup_h\sup_{n\ge0}\norm{A_h^n}_{D,h\to D,h}\le M_D.
\]
Thus the estimate remains uniform along arbitrary refining lattice families.
More generally, let $\Theta$ be a compact subset of the open parameter region
in \eqref{eq:main-conditions}, and suppose that
$(a_h,\alpha_h,s_{-,h})\in\Theta$.  Let $K_{i,h}$, $D_h$, and $\Coll_h$ be the
associated equilibrium blocks, metric, and collision; write
$\norm{f}_{D_h,h}^2:=h^d\sum_{x,i}f_i(x)^TK_{i,h}^{-1}f_i(x)$, and suppose that
$\Trans_h$ satisfies Definition~\ref{def:reversible} relative to $D_h$.  Then,
with $A_h=\Trans_h\Coll_h$,
\[
 M_\Theta:=\max_{(a,\alpha,s_-)\in\Theta}M_D(a,\alpha,s_-)<\infty,
 \qquad
 \sup_h\sup_{n\ge0}\norm{A_h^n}_{D_h,h\to D_h,h}\le M_\Theta.
\]
\end{corollary}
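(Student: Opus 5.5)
The plan is to derive both assertions of the corollary from Theorem~\ref{thm:main} by two observations: the weight $h^d$ rescales the norm without affecting operator norms, and $M_D$ depends continuously on the parameters.

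\textbf{Fixed parameters.} For fixed $(a,\alpha,s_-)$ I will note that $\norm{f}_{D,h}=h^{d/2}\norm{f}_D$, where $\norm{\cdot}_D$ is the metric \eqref{eq:Dmetric} on $\Hpop_{\Lambda_h}$. For any linear operator $A$ on $\Hpop_{\Lambda_h}$ the common factor $h^{d/2}$ cancels in the quotient $\norm{Af}_{D,h}/\norm{f}_{D,h}$, so $\norm{A}_{D,h\to D,h}=\norm{A}_{D\to D}$. This is the cell-volume remark already made at the end of Theorem~\ref{thm:main}. I will then apply \eqref{eq:D-uniform-power} with $\Lambda=\Lambda_h$ and $\Trans=\Trans_h$. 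Since $M_D$ does not depend on $\Lambda$ or $\Trans$, the bound is uniform in $h$, and taking the supremum over $h$ gives the first claim.

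\textbf{Varying parameters.} Each $(a_h,\alpha_h,s_{-,h})$ lies in $\Theta$, which is inside the open region \eqref{eq:main-conditions}. Theorem~\ref{thm:main}, applied with the metric $D_h$ and the collision $\Coll_h$, therefore gives
\[
 \sup_{n}\norm{A_h^n}_{D_h,h\to D_h,h}\le M_D(a_h,\alpha_h,s_{-,h}).
\]
This requires Definition~\ref{def:reversible} to hold relative to $D_h$, which is assumed. It remains to show that $M_\Theta$ is finite, that is, that $M_D$ from \eqref{eq:MD} is continuous on the open region. I will trace the constants through their definitions:
\begin{itemize}[label={}]
\item $\beta=|1-s_-|$ is continuous;
\item $c_J$ from \eqref{eq:cJ} is continuous because $a-\alpha/2>0$;
\item $C_G(b)$ from \eqref{eq:G-uniform-bound} is continuous for $|b|<1$;
\item the factor $1/(1-\beta)$ is continuous for $\beta<1$.
\end{itemize}
A continuous function attains its maximum on the compact set $\Theta$, so $M_\Theta<\infty$ and the second bound follows.

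\textbf{Main obstacle.} The only point that needs care is that compactness of $\Theta$ inside the \emph{open} region keeps us away from the degenerations $a\to\alpha/2$, where $c_J$ blows up, and $|b|\to1$, where $C_G(b)$ blows up. This is exactly what makes $\max_\Theta M_D$ finite, and the argument must use it explicitly. Everything else is a direct restatement of Theorem~\ref{thm:main}.
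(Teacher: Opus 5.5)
Your proposal is correct and follows the paper's own proof. The factor $h^d$ cancels in the induced operator norm, so Theorem~\ref{thm:main} applies verbatim for each $h$; the explicit constant $M_D$ in \eqref{eq:MD} is continuous on the open admissible region, and compactness of $\Theta$ then gives the finite maximum $M_\Theta$, where your term-by-term continuity check of $\beta$, $c_J$, $C_G(b)$, and $1/(1-\beta)$ just spells out what the paper states in one line.
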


\begin{proof}
The factor $h^d$ multiplies both sides of every population norm estimate and hence leaves the induced operator norm unchanged.  The explicit function $M_D$ in \eqref{eq:MD} is continuous on the admissible parameter region, so it attains the finite maximum $M_\Theta$ on $\Theta$.
\end{proof}

\begin{corollary}[Concrete reversible boundaries]
\label{cor:boundaries}
Under \eqref{eq:main-conditions}, the uniform estimates in Theorem~\ref{thm:main} apply to arbitrary finite lattices equipped with any bijective compatible mixture of
\begin{enumerate}[label=(\alph*),leftmargin=2.5em]
\item periodic transport;
\item vector no-slip halfway bounce-back \eqref{eq:noslip-reflection};
\item coordinate-aligned specular reflection \eqref{eq:specular-reflection};
\item tangential orthogonal involutions \eqref{eq:tangential-involution};
\item patchwise orthogonal mixing \eqref{eq:patch-orthogonal}.
\end{enumerate}
For item (e), the patch partition is required to satisfy Lemma~\ref{lem:patch-scattering}.  The constant is independent of the number and arrangement of lattice nodes.
\end{corollary}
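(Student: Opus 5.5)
The plan is to reduce Corollary~\ref{cor:boundaries} to Theorem~\ref{thm:main}. That theorem already gives a constant $M_D(a,\alpha,s_-)$ for every nonempty finite $\Lambda$ and every transport satisfying Definition~\ref{def:reversible}. So it suffices to show that any bijective compatible mixture of the listed boundary rules produces a global $\Trans$ with $\Trans^\adj\Trans=I$ and $\Trans^{-1}=\Jrev\Trans\Jrev$ in the $D$-metric. Independence of the number and arrangement of nodes is then inherited directly from \eqref{eq:MD}, which involves only $a$, $\alpha$, $s_-$.

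\textbf{Links covered by Lemma~\ref{lem:linkwise}.} For items (a)--(d) I would check its hypotheses link by link.
\begin{itemize}
\item Interior and periodic links carry identity blocks between the same velocity index, $i=j$. Then \eqref{eq:link-metric} is trivial.
\item The paired-link requirement also holds for these links. An interior or periodic link $(x-e,i)\to(x,i)$ has the reverse partner $(x,\bar i)\to(x-e,\bar i)$ with block $I=I^{-1}$. This is exactly streaming in the opposite direction; for periodic identification it is the wrapped counterpart.
\item For the reflected links (b)--(d), the component maps $R_0$, $R_j$, $R_{j,Q}$ are involutions. Each conjugates $B_j$ to $-B_j$, hence $K_{+j}$ to $K_{-j}$. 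Taking inverses and using $R^T=R$ gives \eqref{eq:reflection-metric}; this was verified in the paragraphs preceding the corollary.
\item A reflected link $(x,\bar i)\to(x,i)$ is its own reverse pair. Its reverse requires the block $R^{-1}=R$ on the link $(x,\bar i)\to(x,i)$ again, so the pairing is consistent.
\end{itemize}
The hypothesis ``bijective'' ensures that the global map is a block permutation: each target degree has exactly one source, either a streamed predecessor or a reflected partner. Lemma~\ref{lem:linkwise} then yields Definition~\ref{def:reversible}.

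\textbf{Patches, item (e).} I would invoke Lemma~\ref{lem:patch-scattering}. The patch blocks $U_\beta=O_\beta\otimes R_{j,Q}$ satisfy \eqref{eq:patch-conditions} by the computation given after \eqref{eq:patch-orthogonal}. The remaining links are interior, periodic, or single reflections, and these are reverse-paired and metric-compatible as in the previous step. The corollary explicitly requires the patch partition to satisfy the lemma's hypotheses, so the global transport is again reversible and equilibrium-compatible.

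\textbf{Main obstacle.} The only delicate point is compatibility of a mixture. The reverse pairing must hold globally, not only link by link within each rule. If two different rule types could claim the same target degree, or if a reflection sat on a link whose reverse were handled by a different rule, the pairing would fail. I would handle this by reading ``bijective compatible mixture'' as exactly the hypothesis that the source-to-target assignment is a bijection. Under that assignment, the reverse of each link is assigned by the same rule, because every rule above is closed under velocity exchange. With that interpretation fixed, Theorem~\ref{thm:main} gives \eqref{eq:D-uniform-power} and \eqref{eq:2-uniform-power} with constants independent of $\Lambda$ and of the boundary arrangement.
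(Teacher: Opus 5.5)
Your proposal is correct and follows the paper's proof: items (a)--(d) are checked against Lemma~\ref{lem:linkwise}, item (e) against Lemma~\ref{lem:patch-scattering}, and Theorem~\ref{thm:main} then supplies the constant $M_D(a,\alpha,s_-)$, which is independent of $\Lambda$ and of the boundary arrangement. Your link-by-link checks are sound, but bijectivity alone does not give the reverse pairing of wrapped links; that comes from the periodic identification being pairwise (the same identification in both directions), which is part of ``compatible.''
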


\begin{proof}
Items (a)--(d) satisfy Lemma~\ref{lem:linkwise}; item (e) satisfies Lemma~\ref{lem:patch-scattering}.  Theorem~\ref{thm:main} then applies.
\end{proof}

\begin{corollary}[Uniform resolvent and weighted $\ell^2$ estimate]
\label{cor:resolvent}
Under the assumptions of Theorem~\ref{thm:main}, for every $z\in\C$ with $|z|>1$,
\begin{equation}
 \norm{(zI-A_\Lambda)^{-1}}_{D\to D}
 \le \frac{M_D}{|z|-1}.
 \label{eq:resolvent}
\end{equation}
If
\[
 f^{n+1}=A_\Lambda f^n+g^n,
 \qquad f^0=0,
\]
then, for every $r>1$,
\begin{equation}
 \left(\sum_{n\ge1}r^{-2n}\norm{f^n}_D^2\right)^{1/2}
 \le \frac{M_D}{r-1}
 \left(\sum_{n\ge0}r^{-2n}\norm{g^n}_D^2\right)^{1/2}.
 \label{eq:weighted-input}
\end{equation}
Both estimates are uniform in $\Lambda$ and $\Trans$.
\end{corollary}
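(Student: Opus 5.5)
The plan is to derive both estimates from the power bound \eqref{eq:D-uniform-power} of Theorem~\ref{thm:main} by Neumann series and generating-function arguments. For the resolvent, fix $|z|>1$ and expand
\[
 (zI-A_\Lambda)^{-1}=\sum_{n\ge0}z^{-n-1}A_\Lambda^n .
\]
The series converges absolutely in the $D$-operator norm, since $\norm{A_\Lambda^n}_{D\to D}\le M_D$ and $|z|^{-1}<1$. It therefore equals the inverse, as one checks by multiplying by $zI-A_\Lambda$ and telescoping. Summing the geometric majorant gives
\[
 \norm{(zI-A_\Lambda)^{-1}}_{D\to D}\le M_D\sum_{n\ge0}|z|^{-n-1}=\frac{M_D}{|z|-1},
\]
which is \eqref{eq:resolvent}. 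Uniformity in $\Lambda$ and $\Trans$ is inherited directly from $M_D$.

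For the weighted estimate, I would first write the solution by Duhamel's formula:
\[
 f^n=\sum_{k=0}^{n-1}A_\Lambda^{n-1-k}g^k,\qquad n\ge1.
\]
Multiplying by $r^{-n}$ gives
\[
 r^{-n}f^n=\sum_{k=0}^{n-1}\bigl(r^{-(n-k)}A_\Lambda^{n-1-k}\bigr)\bigl(r^{-k}g^k\bigr).
\]
Set $u_n=r^{-n}\norm{f^n}_D$, $v_k=r^{-k}\norm{g^k}_D$, and $c_j=r^{-j-1}M_D$. Then
\[
 u_n\le\sum_{k=0}^{n-1}c_{n-1-k}v_k ,
\]
which is a discrete convolution of $(v_k)_{k\ge0}$ with the kernel $(c_j)_{j\ge0}$.

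By Young's inequality $\ell^1*\ell^2\subset\ell^2$, the $\ell^2$ norm of $(u_n)_{n\ge1}$ is at most $\norm{c}_{\ell^1}\norm{v}_{\ell^2}$. Here
\[
 \norm{c}_{\ell^1}=M_D\sum_{j\ge0}r^{-j-1}=\frac{M_D}{r-1},
\]
which gives \eqref{eq:weighted-input}. If the input sequence is not square-summable with weight $r^{-2n}$, the right side is infinite and there is nothing to prove. Otherwise one can apply the inequality to truncated inputs and pass to the limit by monotone convergence.

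The only point needing care is the index bookkeeping in the convolution: the kernel must start at $j=0$ with weight $r^{-1}$, so that the constant comes out as exactly $M_D/(r-1)$ rather than $rM_D/(r-1)$. Everything else is routine once the power bound is available.
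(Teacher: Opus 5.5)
Your proposal is correct and follows essentially the same route as the paper. The resolvent estimate comes from the Neumann series $\sum_{n\ge0}z^{-n-1}A_\Lambda^n$ with each term bounded by $M_D$. The weighted estimate comes from rewriting Duhamel's formula, after multiplication by $r^{-n}$, as a convolution with kernel $r^{-j-1}A_\Lambda^j$ of $\ell^1$ norm at most $M_D/(r-1)$, and then applying Young's inequality. Your reduction to the scalar sequences $u_n$, $v_k$, $c_j$ and your truncation remark only make explicit what the paper leaves implicit.
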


\begin{proof}
The series
\[
 (zI-A_\Lambda)^{-1}
 =\sum_{n\ge0}z^{-n-1}A_\Lambda^n
\]
converges absolutely in operator norm by \eqref{eq:D-uniform-power} and gives \eqref{eq:resolvent}.  For the recursion with zero initial state,
\[
 f^n=\sum_{k=0}^{n-1}A_\Lambda^{n-1-k}g^k.
\]
After multiplication by $r^{-n}$, this is a discrete convolution whose operator-valued kernel is $r^{-j-1}A_\Lambda^j$, $j\ge0$.  Its $\ell^1$ norm is at most $M_D/(r-1)$.  Young's convolution inequality gives \eqref{eq:weighted-input}.
\end{proof}

\begin{remark}[Relation to GKS stability]
\label{rem:GKS}
Theorem~\ref{thm:main} and Corollary~\ref{cor:resolvent} provide a mesh-uniform homogeneous state estimate, an exterior Kreiss-type resolvent bound, and an exponentially weighted $\ell^2$ estimate for inhomogeneous forcing.  They are not a GKS strong-stability theorem: in particular, they do not give a sharp boundary-trace estimate for arbitrary incoming data or analyze a boundary symbol for the off-midpoint interpolation of Section~\ref{sec:interpolation}.  Those questions require additional boundary-specific work.
\end{remark}

\begin{remark}[Scope]
The uniform estimates concern the homogeneous rest-state linearization on the OTRT line.  The $D$-norm constant $M_D$ deteriorates as $s_-$ approaches $0$ or $2$, or as $a-\alpha/2$ approaches zero.  If $1-2da$ approaches zero while the moving-block gap stays positive, the additional Euclidean conversion factor in \eqref{eq:2-uniform-power}, rather than $M_D$, deteriorates.  A nonzero uniform background velocity changes the equilibrium Jacobians.  Nonlinear stability and convergence to a target PDE are not consequences of the present theorem.
\end{remark}

\section{The off-midpoint parameterized boundary}
\label{sec:interpolation}

The reversible-transport theorem does not automatically cover boundary rules that mix pre- and postcollision populations.  This section analyzes one such off-midpoint interpolation and gives an exact admissible counterexample showing that coefficient convexity alone is not an unconditional stability criterion.

For a boundary-adjacent node and an incoming lattice direction whose predecessor lies outside the fluid domain, consider the homogeneous three-coefficient vector formula obtained by combining the vector reflection with the convex single-node interpolation coefficients of \cite{ZhaoYong2017,ZhaoHuangYong2019}:
\begin{equation}
\begin{aligned}
 f_i^{n+1}(x)={}&\frac{\ell}{1+\ell}f_i^{*,n}(x)
 +\frac{1+\ell-2\gamma}{1+\ell}R_0f_{\bar i}^{n}(x)\\
 &+\frac{2\gamma-\ell}{1+\ell}R_0f_{\bar i}^{*,n}(x),
\end{aligned}
 \label{eq:parameterized-boundary}
\end{equation}
where
\begin{equation}
 0<\gamma\le1,
 \qquad
 \max\{0,2\gamma-1\}\le\ell\le2\gamma.
 \label{eq:parameter-range}
\end{equation}
Here $\gamma$ is the normalized distance from the fluid node to the wall along the cut link, and $\ell$ is the interpolation parameter.  The three coefficients are then nonnegative and sum to one.  The pure halfway vector bounce-back rule is the special case $(\gamma,\ell)=(1/2,0)$.

Formula \eqref{eq:parameterized-boundary} genuinely mixes pre- and postcollision values except on the degeneracy line $\ell=2\gamma-1$, $1/2\le\gamma\le1$.  The halfway point $(1/2,0)$ is covered by Lemma~\ref{lem:linkwise}.  Off the degeneracy line the boundary relation is not a collision-independent postcollision scattering on the original population state.  On the line it is postcollision-only, but the same postcollision source is used both by ordinary forward streaming and by the boundary interpolation.  The following proposition makes the resulting failure of metric unitarity exact.

\begin{proposition}[Characterization of the postcollision-only subfamily]
\label{prop:postcollision-intersection}
Assume that a boundary-adjacent node $x$ has a forward fluid neighbor $x+i\in\Lambda$ and that $f_i^*(x)$ undergoes ordinary forward streaming to that neighbor.  On the degeneracy line
\[
 \ell=2\gamma-1,\qquad \frac12\le\gamma\le1,
\]
if a complete transport on the original population space uses the boundary formula \eqref{eq:parameterized-boundary} at this link, uses this ordinary interior streaming, and is $D$-unitary, then necessarily
\[
 (\gamma,\ell)=\left(\frac12,0\right).
\]
At this point the local closure is precisely vector halfway bounce-back.  If all remaining source--target links satisfy the compatible hypotheses of Lemma~\ref{lem:linkwise} or Lemma~\ref{lem:patch-scattering}, the assembled global transport satisfies Definition~\ref{def:reversible}.
\end{proposition}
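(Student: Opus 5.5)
On the degeneracy line $\ell=2\gamma-1$ the middle coefficient $(1+\ell-2\gamma)/(1+\ell)$ vanishes. The boundary formula \eqref{eq:parameterized-boundary} therefore reduces to the postcollision-only rule
\[
 f_i^{n+1}(x)=\lambda f_i^{*,n}(x)+(1-\lambda)R_0f_{\bar i}^{*,n}(x),
 \qquad \lambda:=\frac{\ell}{1+\ell}=\frac{2\gamma-1}{2\gamma}\in\Bigl[0,\tfrac12\Bigr],
\]
with $1-\lambda=1/(2\gamma)$. The plan is to read off the block column of the transport matrix indexed by the source degree $(x,i)$ and to exploit $D$-unitarity, which means $\Trans^\adj\Trans=I$ and hence, since the space is finite dimensional, also $\Trans\Trans^\adj=I$, column by column.

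The source $f_i^*(x)$ contributes to two targets. First, it streams to $(x+i,i)$ with identity block, by the hypothesis of ordinary forward streaming. Second, it enters the boundary target $(x,i)$ with block $\lambda I_m$. Unitarity in the $D$-metric says the columns of $D_\Lambda^{1/2}\Trans D_\Lambda^{-1/2}$ are orthonormal. Equivalently, the diagonal $(x,i)$-block of $\Trans^T D_\Lambda\Trans$ equals $K_i^{-1}$, and this block is the sum over all targets of $Q^TK_{\text{target}}^{-1}Q$. Here the relevant sum is
\[
 I\cdot K_i^{-1}\cdot I+\lambda^2K_i^{-1}+(\text{nonnegative contributions from any further targets})=K_i^{-1}.
\]
Each further term is positive semidefinite, so this forces $\lambda^2K_i^{-1}\preceq0$. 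Since $K_i^{-1}$ is positive definite by Lemma~\ref{lem:K-positive}, we get $\lambda=0$, that is $\ell=0$, and hence $\gamma=1/2$.

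The main point to check carefully is that "complete transport on the original population space" really means $\Trans$ is a linear map whose $(x,i)$ column contains both of these blocks, with no cancellation possible. Cancellation is impossible because the two blocks sit in distinct target rows: $x+i\ne x$ for a moving velocity. The identity $\Trans^T D\Trans=D$ then gives the column-norm equality without any reversibility assumption. I would phrase this via $\|\Trans u\|_D^2=\|u\|_D^2$ tested on $u$ supported only on the block $(x,i)$ with an arbitrary vector $v\in\R^m$. This yields $v^TK_i^{-1}v+\lambda^2v^TK_i^{-1}v+\dots=v^TK_i^{-1}v$, hence $\lambda=0$.

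At $(\gamma,\ell)=(1/2,0)$ the coefficients become $0,0,1$, so the closure is $f_i^{n+1}(x)=R_0f_{\bar i}^{*,n}(x)$, which is exactly \eqref{eq:link-reflection} with the no-slip choice \eqref{eq:noslip-reflection}. This reflection satisfies \eqref{eq:reflection-metric} by \eqref{eq:Kconjugacy}, as already verified in the bounce-back paragraph. The final assertion then follows from Lemma~\ref{lem:linkwise}, or from Lemma~\ref{lem:patch-scattering} if patches are present, applied to the assembled link set.
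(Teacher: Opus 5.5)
Your proposal is correct and follows essentially the same argument as the paper. You test $D$-unitarity on a state supported only on the source block $(x,i)$, which is sent unchanged to $(x+i,i)$ and with coefficient $\lambda=(2\gamma-1)/(2\gamma)$ to the distinct boundary target $(x,i)$; the extra $\lambda^2 v^TK_i^{-1}v$ in the squared norm forces $\lambda=0$, and the halfway endpoint is then handled through $R_0$ and Lemma~\ref{lem:linkwise}. Your remark that any further targets of that source only add positive semidefinite terms makes the paper's norm bookkeeping slightly more robust, but it is not a different route.
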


\begin{proof}
On the degeneracy line, \eqref{eq:parameterized-boundary} becomes
\begin{equation}
 f_i^{n+1}(x)=A_\gamma f_i^{*,n}(x)
       +C_\gamma R_0f_{\bar i}^{*,n}(x),
 \qquad
 A_\gamma=\frac{2\gamma-1}{2\gamma},
 \quad C_\gamma=\frac1{2\gamma}.
 \label{eq:degenerate-line}
\end{equation}
Choose a postcollision array whose only nonzero source block is $f_i^*(x)=v\ne0$.  Ordinary forward streaming sends this block unchanged to $(x+i,i)$, while \eqref{eq:degenerate-line} also sends $A_\gamma v$ to $(x,i)$.  These are distinct target blocks with the same weight $K_i^{-1}$.  Hence the source contribution to the squared $D$-norm changes from
\[
 v^TK_i^{-1}v
 \quad\hbox{to}\quad
 (1+A_\gamma^2)v^TK_i^{-1}v.
\]
Metric unitarity forces $A_\gamma=0$, and therefore $\gamma=1/2$ and $\ell=0$.  Conversely, these values give $C_\gamma=1$ and the local rule \eqref{eq:noslip-reflection}, which is involutive and metric compatible.  Together with compatible remaining links, Lemma~\ref{lem:linkwise} or Lemma~\ref{lem:patch-scattering} then gives the stated global conclusion.
\end{proof}

Thus, on a standard wall patch containing such a forward fluid neighbor, the boundary formula \eqref{eq:parameterized-boundary} belongs directly to the reversible-transport class only at the halfway value.  Off the degeneracy line it uses a precollision datum; on the line Proposition~\ref{prop:postcollision-intersection} applies.  This already rules out geometry-uniform direct coverage of a non-halfway member by Theorem~\ref{thm:main}.  Here \emph{directly} means through a transport defined independently of the collision.  No assertion is made about a degenerate one-layer configuration without a forward fluid neighbor.  Nor is this a stability necessary condition for the off-midpoint family.  A rate-dependent refactorization, an augmented state, or a different stability argument would require a new proof.  Nonnegative scalar coefficients alone provide neither Definition~\ref{def:reversible} nor power stability.

The proposition concerns the entire vector population space.  An algebraic collapse in a restricted mode, such as a zero-tangential-frequency shear sector at a special relaxation rate, would not by itself verify either identity in Definition~\ref{def:reversible} for the complete transport.

The next proposition gives an exact rational counterexample.  In its full-domain operator, interior populations stream normally, the rest population remains at its node, and every missing predecessor link uses the same pair $(\gamma,\ell)$ in \eqref{eq:parameterized-boundary}.

\begin{proposition}[Exact admissible instability]
\label{prop:counterexample}
Consider D2N5 on the fully bounded $3\times3$ square.  Let
\begin{equation}
 a=\alpha=\frac15,
 \qquad
 s_-=\frac{19}{10},
 \qquad
 s_+=\frac1{10},
 \qquad
 \gamma=\frac34,
 \qquad
 \ell=1.
 \label{eq:counter-parameters}
\end{equation}
The positive-equilibrium condition and \eqref{eq:otrt} hold, and $\ell$ lies strictly inside the interval \eqref{eq:parameter-range}.  The three boundary coefficients are $1/2$, $1/4$, and $1/4$.  Nevertheless, the complete $135\times135$ amplification matrix has a real eigenvalue
\begin{equation}
 \lambda_*=1.005415113082914264\ldots>1.
 \label{eq:unstable-eigenvalue}
\end{equation}
Consequently, no unconditional power-stability theorem can hold over the full parameter set \eqref{eq:parameter-range}, even after restriction to the OTRT line and positive equilibrium blocks.
\end{proposition}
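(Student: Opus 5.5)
The plan is a direct verification followed by an exact eigenvalue certificate. I will carry it out in four steps.

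\emph{Step 1: admissibility checks.} These are arithmetic. With $d=2$ and $a=\alpha=1/5$ we have $\alpha/2=1/10<a=1/5<1/(2d)=1/4$, so Lemma~\ref{lem:K-positive} applies. Its eigenvalues $a\pm\alpha/2=3/10,1/10$ and $1-4a=1/5$ are all positive. Next, $s_++s_-=1/10+19/10=2$ and $0<s_-<2$, so $b=1-s_-=-9/10$. For $\gamma=3/4$ the interval \eqref{eq:parameter-range} is $[1/2,3/2]$, which contains $\ell=1$ in its interior. Finally, the coefficients in \eqref{eq:parameterized-boundary} are $\ell/(1+\ell)=1/2$, $(1+\ell-2\gamma)/(1+\ell)=1/4$ and $(2\gamma-\ell)/(1+\ell)=1/4$.

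\emph{Step 2: assemble the exact amplification matrix.} The state space has $9$ nodes, $5$ velocities and $m=3$ components, hence dimension $135$. One step is
\[
 f^{n+1}=\Trans_{\rm int}\Coll f^n+\mathcal B_{\rm pre}f^n+\mathcal B_{\rm post}\Coll f^n,
\]
with the following pieces.
\begin{itemize}
\item $\Coll$ is the rational matrix \eqref{eq:global-collision} with $\phi=1$ and $\psi=b$.
\item $\Trans_{\rm int}$ moves each postcollision block to its forward neighbour when that neighbour lies in the $3\times3$ square, and keeps the rest population in place.
\item At each target $(x,i)$ whose predecessor $x-i$ lies outside the square, the boundary operators insert $\tfrac12 f_i^{*}(x)+\tfrac14 R_0f_{\bar i}(x)+\tfrac14R_0f_{\bar i}^{*}(x)$.
\end{itemize}
The matrix $A=(\Trans_{\rm int}+\mathcal B_{\rm post})\Coll+\mathcal B_{\rm pre}$ then has rational entries. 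I will form it in exact arithmetic, as in the Supplementary Materials.

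\emph{Step 3: certify an eigenvalue larger than one.} Computing the full characteristic polynomial of a $135\times135$ matrix is wasteful, so I will first use symmetry. The dihedral symmetry of the square, together with the block structure ($R_0$ commutes with the reflections of the square when paired with the corresponding momentum sign changes), splits $A$ into invariant subspaces. The decomposition gives a block of modest size that contains the unstable mode. On that block I will compute the characteristic polynomial $\chi$ over $\mathbb Q$, or a factor of it. I will then certify a real root in $(1,\infty)$ by a sign change. The first check is $\chi(1)$ against the sign of $\chi$ at $1.01$ (equivalently its leading-coefficient sign at $+\infty$). A sharper check uses rational endpoints bracketing $1.005415113$, which also confirms the stated digits. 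As an alternative certificate, I will exhibit an exact rational vector $v\ge0$ with $Av\ge(1+\epsilon)v$ componentwise. This only works if a suitable positivity structure exists, so the sign-change route is primary.

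\emph{Step 4: conclude.} Since $\lambda_*>1$ is an eigenvalue, $\norm{A^n}\ge\lambda_*^n\to\infty$ in any norm. This gives the stated consequence: for this family there is no power bound over \eqref{eq:parameter-range} on the OTRT line.

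The main obstacle is Step 3. It requires exact, reproducible certification that a real root exceeds one and is not merely a floating-point artifact, and the polynomial has large degree and large rational coefficients. My first attempt will be to reduce via symmetry and evaluate $\chi$ exactly at rational points $1$ and $1+10^{-2}$. If the symmetry reduction is awkward, I will fall back to exact sign evaluation of $\det(A-tI)$ at rational $t$ by fraction-free Gaussian elimination on the full $135\times135$ matrix. This avoids expanding $\chi$ entirely, and a sign change of the determinant between $t=1$ and $t=101/100$ proves the claim.
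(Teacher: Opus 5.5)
Your proposal is correct and follows essentially the same route as the paper. Both assemble the rational matrix $A=A_{\rm post}C+A_{\rm pre}$, use the $D_4$ symmetry of the square to pass to an invariant subspace with $AB=BM$, factor the characteristic polynomial of $M$ over $\mathbb Q$, and certify a root above one by exact sign evaluations. The paper fixes the details you leave to computation: the $18$-dimensional sector on which the quarter turn acts as $-1$ and the vertical reflection as $+1$, the degree-nine factor $p_9$, and the bracket $p_9(201/200)<0<p_9(503/500)$. Your full-determinant fallback is logically sound, but unlike a one-dimensional symmetry sector it can be inconclusive if the eigenvalues in the test interval occur with even total multiplicity.
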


\begin{proof}
Order the velocities as $(+e_x,+e_y,-e_x,-e_y,0)$ and the vector components as $(\rho,m_x,m_y)$.  The complete matrix is assembled as $A=A_{\rm post}C+A_{\rm pre}$, where $C$ is block-diagonal collision and the two boundary matrices collect the postcollision and pre-collision coefficients in \eqref{eq:parameterized-boundary}.  All entries of $A$ are rational under \eqref{eq:counter-parameters}.  The square-domain operator commutes with the dihedral group $D_4$.  Restrict it to the symmetry subspace on which a quarter turn acts as $-1$ and vertical reflection acts as $+1$.  This invariant subspace has dimension $18$.  For a full-column-rank rational basis matrix $B\in\mathbb Q^{135\times18}$, exact reduction gives $M\in\mathbb Q^{18\times18}$ satisfying $AB=BM$.  Thus every eigenvalue of $M$ is an eigenvalue of $A$, and the characteristic polynomial of $M$ is, up to a nonzero constant,
\begin{equation}
 (10z-9)^5(10z+9)^2(40z-9)(40z+9)p_9(z),
 \label{eq:factorization}
\end{equation}
where
\begin{align}
 p_9(z)={}&32000000000z^9+4320000000z^8-84210200000z^7
 -14241277000z^6\notag\\
 &+75759196300z^5+14873855940z^4-24485318982z^3\notag\\
 &-5314983642z^2+1049800095z+231434901.
 \label{eq:p9}
\end{align}
Exact rational evaluation gives
\begin{equation}
\begin{aligned}
 p_9\left(\frac{201}{200}\right)
 &=-\frac{23587881443261289}{16000000000}<0,\\
 p_9\left(\frac{503}{500}\right)
 &=\frac{4133905199135116328391}{1953125000000000}>0.
\end{aligned}
 \label{eq:sign-bracket}
\end{equation}
The opposite signs in \eqref{eq:sign-bracket} imply that $p_9$ has a real root in $(1.005,1.006)$.  Since $p_9$ is a factor of the characteristic polynomial of $M$, this root is an eigenvalue of the full amplification matrix $A$.  A high-precision evaluation, used only to report the decimal in \eqref{eq:unstable-eigenvalue}, gives $1.005415113082914264\ldots$.  Section~SM2 (``Exact rational counterexample'') of the Supplementary Materials gives the rational reduced matrix, invariant-subspace identity, and exact factorization; Section~SM5 (``Reproduction commands'') gives the commands for verifying these identities in exact arithmetic with the accompanying programs.
\end{proof}

\begin{remark}
With the same $3\times3$ domain and the same collision parameters, homogeneous vector halfway bounce-back has spectral radius one to numerical precision and is covered by Corollary~\ref{cor:boundaries}.  This comparison rules out the bulk OTRT collision alone as the source of the unstable eigenvalue in Proposition~\ref{prop:counterexample}.
\end{remark}

\begin{remark}
Proposition~\ref{prop:counterexample} does not assert that every off-midpoint parameter is unstable.  It only excludes the full convex-coefficient range as an unconditional stability region.  The finite-domain parameter sweeps reported in Section~SM6 (``Finite-domain spectral calculations'') and Figures~SM6.1, SM6.2, and SM6.4 of the Supplementary Materials exhibit spectrally unstable parameter values and others for which no eigenvalue outside the unit disk is detected.  The sampled finite-domain pattern varies with wall fraction, interpolation parameter, and domain size.  This observation motivates boundary-symbol analysis, but it neither identifies a resonance mechanism nor proves power stability of the spectrally neutral candidates.
\end{remark}

\section{Further questions in boundary stability}
\label{sec:discussion}

We distinguish three remaining issues: additional reversible boundary operators, stability of the existing off-midpoint interpolation, and an energy-compatible off-midpoint extension.

\subsection{Additional boundary operators covered by the theorem}

The proof of Theorem~\ref{thm:main} uses only the two identities in Definition~\ref{def:reversible}.  It therefore applies beyond no-slip bounce-back whenever a boundary closure is a bijective weighted-unitary scattering of outgoing populations into incoming populations and is reversed by $\Jrev$.  Lemma~\ref{lem:linkwise} covers linkwise block permutations; Lemma~\ref{lem:patch-scattering} gives the exact weighted matrix test for multi-channel scattering.  Equations \eqref{eq:tangential-involution} and \eqref{eq:patch-orthogonal} provide nontrivial additional full-vector families satisfying that test.

This criterion also separates the present theorem from accuracy-oriented unified boundary frameworks.  The enhanced single-node ELI class \cite{MarsonEtAl2021} and the LI$^+$/EMR framework \cite{GinzburgEtAl2023Unified} include rules using several local pre- or postcollision populations, nonequilibrium corrections, different temporal discretizations, or neighboring fluid nodes.  The parametric vectorial finite-difference family in \cite{ZhangFengZhao2021} likewise uses an enlarged propagation and interpolation stencil.  None of these structural labels implies \eqref{eq:patch-conditions}.  In particular, if a source population is retained for its ordinary interior target and is also copied with a nonzero coefficient into a boundary target, the single-source argument in the proof of Proposition~\ref{prop:postcollision-intersection} rules out unitarity in the unmodified equilibrium norm.  A specific member is covered only if its complete homogeneous scattering map satisfies \eqref{eq:patch-conditions}; neighboring-node, memory, and pre/postcollision rules generally require an augmented energy or a joint collision--boundary estimate.

Rules that prescribe density, pressure, or nonzero velocity are generally affine.  Their amplification operator is governed by the homogeneous part; stability with data then requires a compatible lifting or a summability estimate as in Corollary~\ref{cor:resolvent}.  A boundary formula that genuinely mixes pre- and postcollision values is not a collision-independent postcollision transport on the original state space unless additional variables or a different factorization are introduced.  For \eqref{eq:parameterized-boundary}, such mixing occurs precisely off the line $\ell=2\gamma-1$, $1/2\le\gamma\le1$; on that line, Proposition~\ref{prop:postcollision-intersection} shows that only its halfway endpoint is $D$-unitary under ordinary forward streaming.

\subsection{Analysis of the existing off-midpoint interpolation}

The pure halfway flat-wall closure is already covered uniformly by Theorem~\ref{thm:main}.  A stable subset of the off-midpoint parameterized family may still exist.  For a flat wall, Fourier transformation in the tangential directions reduces that problem to a wall-normal recurrence.  The relevant boundary symbol must be checked for generalized eigenvalues on and outside the unit circle, including glancing and boundary--bulk resonant modes; the recent scalar analyses \cite{Bellotti2026TwoVelocities,Bellotti2026Raw} illustrate the type of information required.  On a fixed domain, spectral data must first be checked for semisimplicity of unit-circle eigenvalues.  A positive definite matrix $H$ satisfying
\begin{equation}
 A_{\gamma,\ell}^*HA_{\gamma,\ell}-H\preceq0
 \label{eq:Lyapunov}
\end{equation}
directly provides a sufficient power estimate.  Spectral radii alone are not enough when $A_{\gamma,\ell}$ is nonnormal; pseudospectra and powers should also be examined \cite{TrefethenEmbree2005}.  Such computations can suggest inequalities, but a mesh-independent theorem requires an analysis uniform in tangential wave number and wall-normal resolution.

\subsection{A possible energy-compatible off-midpoint extension}

An alternative is to build the energy structure into the boundary rule.  Introduce a finite boundary memory $z^n$ on each cut link and write an extended scattering relation
\begin{equation}
 \begin{pmatrix}f_{\mathrm{in}}^{n+1}\\ z^{n+1}\end{pmatrix}
 =\mathscr U_\gamma
 \begin{pmatrix}R_0f_{\mathrm{out}}^{*,n}\\ z^n\end{pmatrix}.
 \label{eq:extended-scattering}
\end{equation}
A candidate matrix $\mathscr U_\gamma$ should satisfy a positive energy identity or passivity inequality in an extended metric, a reversal relation, and the consistency equations that place the physical wall at the prescribed fraction $\gamma$.  These algebraic requirements can be combined with the second-order moment matching used in single-node boundary constructions \cite{ZhaoYong2017,ZhaoHuangYong2019}.  Equation \eqref{eq:extended-scattering} states algebraic constraints for an energy-compatible off-midpoint rule; no such rule is constructed here.

\section{Conclusion}
\label{sec:conclusion}

The relation $s_++s_-=2$ converts the complete vector TRT update with a reversible boundary into an operator-valued two-step recurrence.  The equilibrium metric makes the lifting isometric and the homogeneous transport unitary, so the macroscopic compression is a contraction.  Numerical-range control of the companion recurrence gives a population-space power bound that is uniform over arbitrary finite lattices and all transports in the reversible class.  The result covers periodic transport, homogeneous vector no-slip halfway bounce-back, coordinate-aligned specular reflection, tangential orthogonal involutions, and compatible multi-channel patch scattering.

The exact D2N5 construction shows that nonnegative interpolation coefficients do not provide an unconditional stability condition for the three-coefficient off-midpoint vector boundary formula considered here.  One open problem is to determine a restricted stable parameter set by a flat-wall GKS and boundary-symbol analysis; another is to construct an augmented boundary scattering rule with an imposed energy or passivity structure and then establish its consistency and PDE-level convergence.  Nonzero-background linearizations, nonlinear stability, a full boundary-trace theory for off-midpoint walls, and a completed second-order energy-compatible boundary discretization are not addressed here.

\section*{Computational verification}
The Supplementary Materials give the matrix conventions, the rational invariant-subspace calculation underlying Proposition~\ref{prop:counterexample}, and auxiliary finite-domain spectral computations.  The accompanying archive contains the exact matrices, numerical data, and programs used to reproduce these calculations and figures.  The instability assertion follows from rational identities and sign evaluations; floating-point computation is used only for the reported decimal values and the illustrative numerical experiments.

\section*{Acknowledgments}
Generative-AI tools were used for editorial and technical assistance and in organizing and testing the accompanying computational materials.  The author reviewed the resulting material and assumes responsibility for all content.

\bibliographystyle{plain}
\bibliography{references}

\clearpage
\setcounter{section}{0}
\setcounter{equation}{0}
\setcounter{figure}{0}
\setcounter{table}{0}
\renewcommand{\thesection}{SM\arabic{section}}
\renewcommand{\theequation}{SM\arabic{equation}}
\renewcommand{\thefigure}{SM\arabic{figure}}
\renewcommand{\thetable}{SM\arabic{table}}
\renewcommand{\theHsection}{SM.\arabic{section}}
\renewcommand{\theHfigure}{SM.\arabic{figure}}
\renewcommand{\theHtable}{SM.\arabic{table}}

\begin{center}
{\Large\bfseries Supplementary Materials}\par
\medskip
{\large Mesh-Uniform Power Stability of Two-Relaxation-Time Vector Lattice
Boltzmann Schemes with Reversible Boundaries}\par
\medskip
Jin Zhao
\end{center}
\bigskip

This supplement specifies the matrix assembly used in the numerical experiments, gives the exact rational calculation supporting Proposition~\ref{prop:counterexample}, and reports auxiliary finite-domain computations.  The mesh-uniform stability theorem is proved analytically in the main article.  For the counterexample, all decisive identities are rational: an invariant-subspace relation, a characteristic factorization, and two sign evaluations.  The floating-point calculations reported below are illustrative and are not used to establish the instability result.

\section{Matrix assembly}
\label{sec:sm-assembly}

For a domain $\Lambda=\{x_1,\ldots,x_N\}$, velocities are ordered as
\[
 (+e_1,\ldots,+e_d,-e_1,\ldots,-e_d,0),
\]
and each population block uses component order
\[
 (\rho,m_1,\ldots,m_d).
\]
The global vector is node-major, then velocity-major, then component-major.  The local lifting, moment, and reversal matrices are
\[
 K_{\rm loc}=\operatorname{col}(K_1,\ldots,K_q),\qquad
 S_{\rm loc}=(I_m\ \cdots\ I_m),
\]
with the reversal matrix containing an $I_m$ block in position $(i,\bar i)$.  Global matrices are obtained by Kronecker product with $I_N$.

The equilibrium weight used in the main article is
\[
 D=I_N\otimes D_{\rm loc},\qquad
 D_{\rm loc}=\operatorname{diag}\bigl(K_1^{-1},\ldots,K_q^{-1}\bigr).
\]

For the linkwise reversible rules, every target population has exactly one source.  An interior source is translated with component matrix $I_m$.  A missing link is closed with either $R_0=\operatorname{diag}(1,-I_d)$ or the coordinate reflection $R_j=\operatorname{diag}(1,I_d-2e_je_j^T)$.  The larger class in Lemma~\ref{lem:patch-scattering} permits several outgoing source blocks to be mixed into an equal number of incoming targets, subject to the two exact matrix identities in \eqref{eq:patch-conditions}.  For the numerical matrices, we evaluate the residuals of
\[
 \sum_i K_i=I,\qquad K^TDK=I,\qquad S=K^TD,
\]
\[
 T^TDT=D,\qquad T^{-1}=JTJ,\qquad (TJ)^2=I,
\]
We also form $P=STK$.  These residuals serve only to verify the matrix assembly and are not used in the analytical proof.

For the parameterized boundary, the complete matrix is assembled as
\[
 A=A_{\rm post}C+A_{\rm pre},
\]
where $C$ is block-diagonal collision.  At a missing predecessor link, the target row receives
\[
 \frac{\ell}{1+\ell}f_i^*,\qquad
 \frac{1+\ell-2\gamma}{1+\ell}R_0f_{\bar i},\qquad
 \frac{2\gamma-\ell}{1+\ell}R_0f_{\bar i}^*.
\]
This convention agrees with the incoming-direction formula in the main article.

\section{Exact rational counterexample}
\label{sec:sm-certificate}

For the D2N5 counterexample, the full matrix has dimension $9\times5\times3=135$.  All parameters and all entries are rational.  Let $\mathscr R$ be active quarter-turn rotation and $\mathscr F$ vertical reflection, acting simultaneously on nodes, velocities, and momentum components.  The matrix commutes with these actions.  The unnormalized group sum
\[
 \Pi_B=\sum_{k=0}^3(-1)^k(\mathscr R^k+\mathscr F\mathscr R^k)
\]
satisfies $\Pi_B^2=8\Pi_B$ and has rank $18$.  Thus $\Pi_B/8$ is the
projector onto the symmetry type
\[
 \mathscr Rf=-f,\qquad \mathscr Ff=f.
\]
Independent integer columns of $\Pi_B$ form a basis matrix $B$.  If $B_r$ is an invertible set of independent rows, the restricted matrix is
\[
 M=B_r^{-1}(AB)_r.
\]
All entries of $A$, $B$, and $M$ are rational.  The identity $AB=BM$ proves invariance of the range of $B$ and identifies $M$ as the restricted operator.  Factoring $\det(zI-M)$ over the rationals gives the factorization and degree-nine polynomial stated in Proposition~\ref{prop:counterexample}.  The accompanying data include the exact basis and reduced matrices, the factorization, and the values used to verify the rank, symmetry, invariant-subspace relation, polynomial coefficients, and signs.  The sign evaluations are
\[
 p_9(201/200)=-\frac{23587881443261289}{16000000000}<0,
\]
\[
 p_9(503/500)=\frac{4133905199135116328391}{1953125000000000}>0.
\]
Because the two exact values have opposite signs, the intermediate value theorem gives a real zero of $p_9$ in $(201/200,503/500)$.  Since $p_9$ is a characteristic factor of the invariant restriction and $M$ represents the restriction of $A$, this zero is an eigenvalue of the full amplification matrix and exceeds one.  No floating-point eigenvalue computation enters this conclusion; the decimal approximation in the main article is descriptive only.  The files \path{counterexample_basis.txt}, \path{counterexample_reduced_matrix.txt}, \path{counterexample_factor.txt}, and \path{counterexample_verification.txt} contain these exact quantities.

\section{Numerical verification of structural identities}
\label{sec:sm-structural}

Table~\ref{tab:structure} summarizes representative calculations.  D3N7 uses $a=\alpha=1/7$ so that $\alpha/2<a<1/(2d)$; the one- and two-dimensional cases use $a=\alpha=1/5$.  For the reversible cases in Table~\ref{tab:structure}, the largest residual among the six tested identities is below $3\times10^{-15}$.  Individual values are reported in \texttt{data/verification\_table.csv}.

\begin{table}[!htbp]
\centering
\small
\caption{Representative complete amplification spectra on the OTRT line.  Each reversible row was checked for $s_-\in\{0.2,1,1.9\}$; every reported reversible spectral radius is within $10^{-13}$ of one.}
\label{tab:structure}
\begin{tabular}{@{}llrr@{}}
\toprule
model and domain & boundary & nodes & spectral radius\\
\midrule
D1N3 interval & no-slip / specular & 7 & $1+O(10^{-14})$\\
D2N5 square & no-slip / specular & 9 & $1+O(10^{-14})$\\
D2N5 rectangle & no-slip / specular & 12 & $1+O(10^{-14})$\\
D2N5 rectangle & periodic / axis mixtures & 12 & $1+O(10^{-14})$\\
D2N5 L-shaped domain & no-slip / specular & 12 & $1+O(10^{-14})$\\
D3N7 cube & no-slip / specular & 8 & $1+O(10^{-14})$\\
D2N5 off-midpoint interpolation & parameterized & 9 & $1.005415113083$\\
\bottomrule
\end{tabular}
\end{table}

For a D2N5 calculation on the L-shaped domain with random initial data, the directly stepped macroscopic moment is compared with the recurrence in Lemma~\ref{lem:recurrence}.  In this calculation, the maximum relative residual over the tested steps and the maximum relative residual in the identity $S(TJ)f^n=w^{n-1}$ are both below $3\times10^{-16}$.  The computed norm of $P$ is one to the displayed precision.

The exact-arithmetic calculation also verifies the full-vector extension of the boundary class.  In D3N7 with $a=\alpha=1/7$, it uses the rational tangential involution and patch-channel involution
\[
 Q=O=\frac15\begin{pmatrix}3&-4\\-4&-3\end{pmatrix}.
\]
Direct rational calculation gives $R_{1,Q}^2=I$, $R_{1,Q}^TK_{+1}^{-1}R_{1,Q}=K_{-1}^{-1}$, $(O\otimes R_{1,Q})^2=I$, and
\[
 (O\otimes R_{1,Q})^T(I_2\otimes K_{+1}^{-1})(O\otimes R_{1,Q})
 =I_2\otimes K_{-1}^{-1}.
\]
For the postcollision-only line of \eqref{eq:parameterized-boundary}, forming the two-source/two-target transport block gives the following duplicated-source diagonal block of $U^TD_{\rm target}U-D_{\rm source}$ exactly:
\[
 \left(\frac{2\gamma-1}{2\gamma}\right)^2K_i^{-1}.
\]
This is the algebraic identity used in Proposition~\ref{prop:postcollision-intersection}; it vanishes for $\gamma\ge1/2$ only at $\gamma=1/2$.  These rational identities are recorded in \path{data/local_algebra_checks.json}.

For twelve randomly generated nonnormal contractions of dimensions $3$, $5$, and $8$ and $b=-0.9,-0.3,0,0.7$, we computed spectra and power norms for $0\le n\le200$.  A further example with a reducing unitary block was computed for $0\le n\le500$.  The numerical values are recorded in \path{data/recurrence_companion_checks.json} and provide comparisons with Theorem~\ref{thm:companion}; the proof itself uses the numerical-range estimate in the main article.

\section{Numerical illustrations of the uniform companion estimate}
\label{sec:sm-companion}

For the examples below, we evaluate the block identity
\[
 G_b(P)^n=
 \begin{pmatrix}
 q_{n+1}(P+bP^*)&-bq_n(P+bP^*)\\
 q_n(P+bP^*)&-bq_{n-1}(P+bP^*)
 \end{pmatrix}
\]
for random strict contractions and for truncated unilateral shifts of dimensions $4$, $8$, $16$, and $32$.  The shifts provide nonnormal examples.  The largest relative residual in the block identity is below $2\times10^{-15}$.  In every case the observed power norm lies below the explicit constant in Theorem~\ref{thm:companion}; the largest observed ratio to this nonsharp constant is approximately $0.208$.  Individual values and the theorem constants are recorded in \path{data/uniform_companion_checks.json}.  These computations illustrate the recurrence identity and the conservative size of the explicit constant; the uniform bound follows analytically from the Crouzeix--Palencia theorem and the ellipse estimate in the main article.

\section{Reproduction commands}
\label{sec:sm-reproduction}

The programs require NumPy, SymPy, and Matplotlib; the tested versions are listed in the accompanying dependency record.  From the extracted archive root, run the following command to reproduce all calculations and figures:
\begin{verbatim}
python reproduce.py
\end{verbatim}
The option \texttt{--skip-plots} performs only the algebraic calculations.  The command evaluates the exact identities and numerical examples before generating the figures.

\section{Finite-domain spectral calculations}
\label{sec:sm-interpolation}

The following computations use D2N5 with $a=\alpha=0.2$, $s_-=1.9$, and $s_+=0.1$.  They complement the exact counterexample but do not define a mesh-independent stability region.

\begin{figure}[!htbp]
\centering
\includegraphics[width=.80\textwidth]{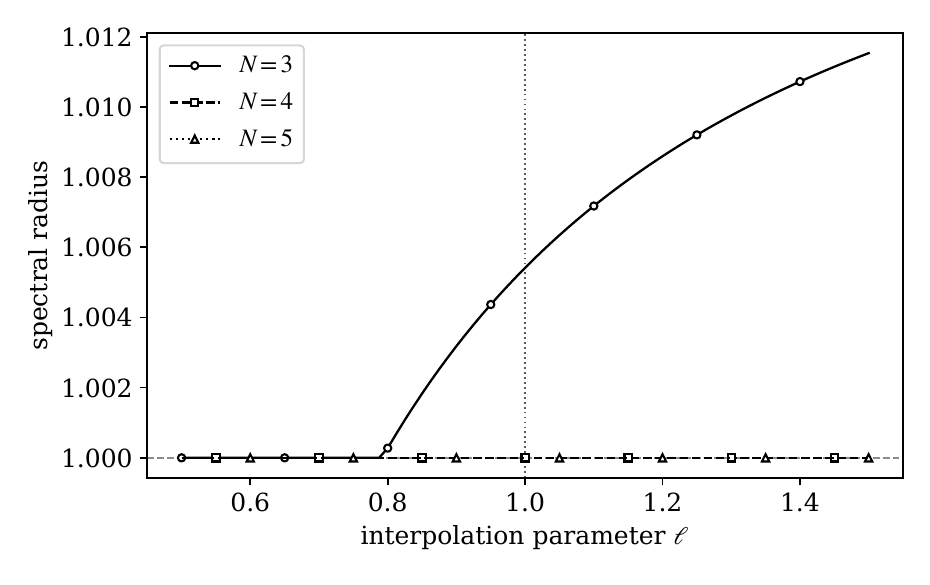}
\caption{Spectral radius as a function of $\ell$ for $\gamma=3/4$ on square domains; each curve joins 81 equally spaced samples.  For $N=3$, the vertical dotted line at $\ell=1$ marks the exact counterexample.  The $N=4$ and $N=5$ curves overlap at the plotting resolution.  The size dependence illustrates finite-domain effects.  The insufficiency of coefficient convexity as a stability condition is established separately by the exact counterexample.}
\label{fig:ell-curves}
\end{figure}

\begin{figure}[!htbp]
\centering
\includegraphics[width=.80\textwidth]{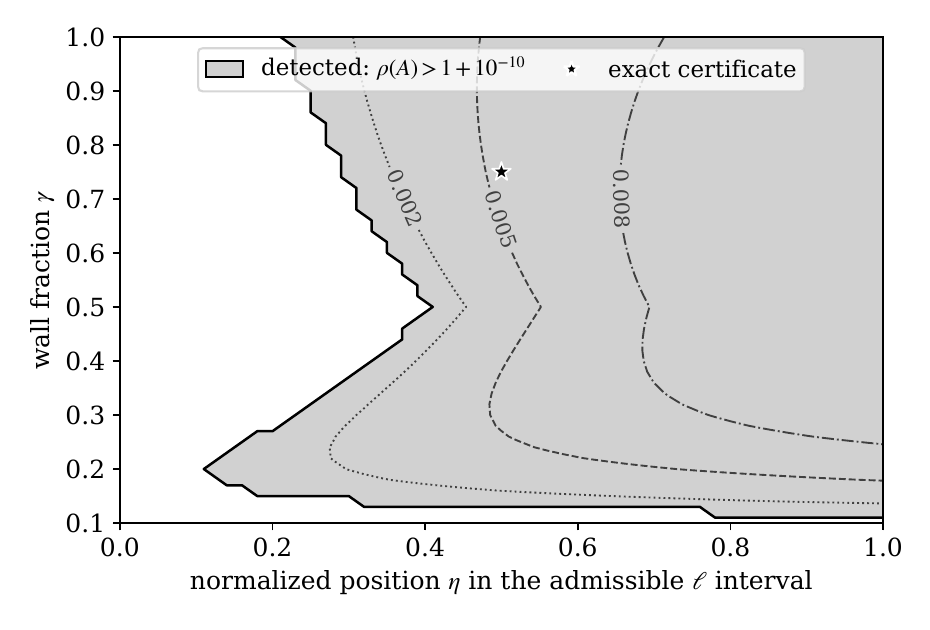}
\caption{Spectral radius on the $3\times3$ square.  For each $\gamma$, let $\ell_{\min}(\gamma)=\max\{0,2\gamma-1\}$ and $\ell=\ell_{\min}(\gamma)+\eta[2\gamma-\ell_{\min}(\gamma)]$.  On the $47\times51$ parameter grid, light gray marks samples with $\rho(A)>1+10^{-10}$, and the solid black curve traces the sampled boundary of this thresholded set.  The labeled dotted, dashed, and dash-dotted curves are interpolated contours of $\rho(A)-1$ at $0.002$, $0.005$, and $0.008$, respectively.  The star marks the exact counterexample $(\gamma,\ell,\eta)=(3/4,1,1/2)$.  White only means that no eigenvalue outside the unit disk was detected at this tolerance; it is not a proof of power stability.}
\label{fig:parameter-map}
\end{figure}

\begin{figure}[!htbp]
\centering
\includegraphics[width=.80\textwidth]{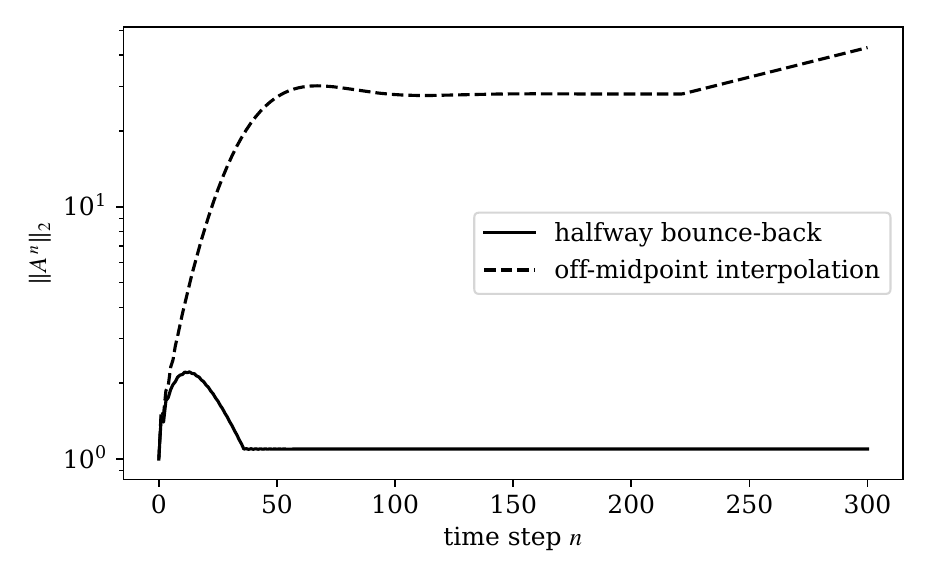}
\caption{Euclidean norms of powers on the $3\times3$ square.  The halfway operator is power bounded by Corollary~\ref{cor:boundaries}; for $0\leq n\leq300$, its computed power norm has maximum approximately $2.216$.  For the off-midpoint counterexample, the computed norm reaches approximately $42.8$ at step $300$.}
\label{fig:powers}
\end{figure}

\begin{figure}[!htbp]
\centering
\includegraphics[width=.80\textwidth]{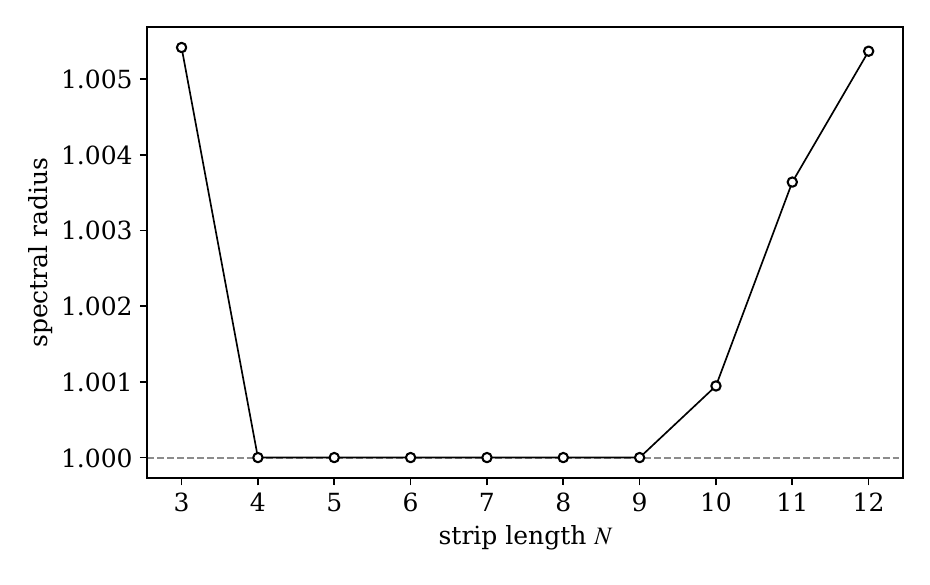}
\caption{Spectral radius for $N\times3$ strips with the exact counterexample parameters.  For $N=4,\ldots,9$, the computed values differ from one by at most $1.1\times10^{-14}$; this floating-point observation does not establish power stability.  The variation with $N$ records finite-domain dependence, and no resonance mechanism or mesh-asymptotic conclusion is inferred from this plot alone.}
\label{fig:strips}
\end{figure}

The accompanying data include the CSV files underlying Figures~\ref{fig:ell-curves}--\ref{fig:strips}.  Figure~\ref{fig:powers} also reports $\lVert A^n\rVert_2$, since spectral radius alone does not describe transient growth for a nonnormal matrix.

\end{document}